\DeclareSymbolFontAlphabet{\mathbb}{AMSb}
\DeclareSymbolFontAlphabet{\mathbbl}{bbold}
\newtheorem{lemma}{Lemma}[section]
\newtheorem*{lemma*}{Lemma}
\newtheorem{theorem}[lemma]{Theorem}
\newtheorem*{theorem*}{Theorem}
\newtheorem{corollary}[lemma]{Corollary}
\newtheorem{proposition}[lemma]{Proposition}
\newtheorem*{proposition*}{Proposition}
\newtheorem*{fact*}{Fact}
\newtheorem*{notation*}{Notation}
\newtheorem*{conventions*}{Conventions}
\newtheorem{remark}[lemma]{Remark}
\newtheorem*{remark*}{Remark}
\newtheorem*{corollary*}{Corollary}
\newtheorem*{conjecture*}{Conjecture}
\newtheorem*{problem*}{Problem}
\newtheorem{question}{Question}
\newtheorem*{question*}{Question}
\newtheorem{assumption*}{Assumption}
\theoremstyle{definition}
\newtheorem*{example*}{Example}
\newtheorem{definition}[lemma]{Definition}
\newtheorem*{definition*}{Definition}
\theoremstyle{remark}
\newtheorem*{claim*}{Claim}
\newtheorem*{case*}{Case}
\newtheorem*{construction*}{Construction}
\newtheorem*{exercise*}{Exercise}
\numberwithin{equation}{section}
\newcommand{\bs}{\backslash}
\newcommand\BB{{\mathcal B}}
\newcommand\CC{{\mathcal C}}
\newcommand\DD{{\mathcal D}}
\newcommand\FF{{\mathcal F}}
\newcommand\GG{{\mathcal G}}
\newcommand\LL{{\mathcal L}}
\newcommand\PP{{\mathcal P}}
\newcommand\<{\langle}
\renewcommand\>{\rangle}
\newcommand\cont{2^{\aleph_0}}
\newcommand{\ra}{\rightarrow}
\newcommand{\Ra}{\Rightarrow}
\newcommand{\La}{\Leftarrow}
\def\Ind#1#2{#1\setbox0=\hbox{$#1x$}\kern\wd0\hbox to 0pt{\hss$#1\mid$\hss}
\lower.9\ht0\hbox to 0pt{\hss$#1\smile$\hss}\kern\wd0}
\def\notind#1#2{#1\setbox0=\hbox{$#1x$}\kern\wd0
\hbox to 0pt{\mathchardef\nn=12854\hss$#1\nn$\kern1.4\wd0\hss}
\hbox to 0pt{\hss$#1\mid$\hss}\lower.9\ht0 \hbox to 0pt{\hss$#1\smile$\hss}\kern\wd0}
\def\includeE#1{{\lhook\kern-3.5pt\joinrel\smash{
		\mathop{\longrightarrow}\limits^{#1}}}}
\def\efor/{Example~\ref{E4}}
\def\BL/{Baldwin--Lachlan}
\def\Bu/{Buechler}
\def\Hr/{Hrushovski}
\def\lm/{locally modular}
\def\wm/{weakly minimal}
\def\nm/{non--modular}
\def\ss/{superstable}
\def\ud/{unidimensional}
\def\sm/{strongly minimal}
\def\hbar{\bar{h}}
\def\tr/{trivial}
\def\nt/{non--trivial}
\def\st/{strong type}
\def\Fa0{{\FF^a_{\aleph_0}}}
\def\<{\langle}
\def\>{\rangle}
\newcommand{\Geom}{\mathrm{Geom}}
\newcommand{\Mon}{\mathrm{Mon}}
\newcommand{\Av}{\mathrm{Av}}
\renewcommand{\phi}{\varphi}
\newcommand{\lex}{<_{lex}}
\newcommand{\clex}{\prec_{lex}}
\newcommand{\clexeq}{\preceq_{lex}}
\newcommand{\grant}{This paper is part of a project that has received funding from the 
	European Research Council (ERC) under the European Union's Horizon 2020 
	research and innovation programme (grant agreement No 810115 - Dynasnet). The author is further supported by Project 21-10775S of the Czech Science Foundation (GA\v{C}R)}
\title{Decidability in geometric grid classes of permutations}
\author{Samuel Braunfeld}
\thanks{\noindent \grant}
\subjclass[2020]{Primary 05A05, 68R15}
\begin{document}
	\begin{abstract}
		We prove that the basis and the generating function of a geometric grid class of permutations $\Geom(M)$ are computable from the matrix $M$, as well as some variations on this result. Our main tool is monadic second-order logic on permutations and words.
	\end{abstract}

\maketitle

\section{Introduction}

The study of permutation classes is focused on hereditary families of permutations, which are closed downwards under a natural notion of containment analogous to induced subgraph. Such a class is determined by its {\em basis}, i.e. the set of minimal forbidden permutations. A recurring problem, going back to the origin of the field in Knuth's result that the stack-sortable permutations are precisely those forbidding $(231)$ \cite{knuth1968art}, is that one is given an abstract description of a permutation class and wants to determine an explicit description by giving the basis. We will be concerned with such a question for {\em geometric grid classes}, as well as the problem of determining the generating function. 

As the name suggests, geometric grid classes are permutation classes defined by certain simple geometric constraints. Namely, one fixes a finite grid in the plane, for each cell chooses to leave it empty or to draw either a diagonal line or an anti-diagonal line, and considers the class of all permutations that can be plotted on the resulting set of diagonal and anti-diagonal lines (see Figure \ref{fig:geom}). They have found diverse applications, including the enumeration of specific permutation classes \cite{pantone2017enumeration}, the characterization of ``small'' permutation classes with growth rate below a certain threshold \cite{albert2015inflations}, the algorithmic analysis of permutation classes of polynomial growth \cite{homberger2016effective}, strengthenings of well-quasi-order \cite{brignall2022labeled}, and connections with the graph parameter lettericity \cite{alecu2022letter}. These classes were introduced in \cite{albert2013geometric}, which proved that they have finite bases and rational generating functions. These proofs relied on an explicit encoding of permutations in a given geometric grid class into the words of a regular language, but were ultimately non-constructive, fundamentally relying on Higman's lemma that the set of finite words in a finite alphabet contains no infinite antichain in the subword ordering. In particular, the style of argument does not determine the size of the basis or an upper bound on the size of the basis elements, and so the naive attempt to compute the basis by going through all permutations in increasing size and checking whether each is a basis element will eventually find all the basis elements, but cannot determine when it has actually finished and should halt. 

This non-constructiveness represented a gap in the understanding of these relatively simple classes, and has presented an obstacle to their algorithmic analysis (e.g., see the comments in \cite{bassino2017algorithm}). Thus the conclusion to \cite{albert2013geometric} posed the problem of proving it is computable, given the grid specifying a geometric grid class, to determine its basis and generating function (see also \cite[\S 5.2]{huczynska2015well}). We give an upper bound on the size of basis elements and a resolve these problems.

\begin{theorem} [Proposition \ref{prop:bound}, Theorems \ref{thm:compBasis}, \ref{thm:compgf}]
	Let $M$ represent a finite $0/$$\pm1$-matrix and $\Geom(M)$ the corresponding geometric grid class. Then the size of the basis elements of $\Geom(M)$ are bounded above by a computable (in fact, elementary) function of $|M|$, and the basis of $\Geom(M)$ and the generating function of $\Geom(M)$ are computable from $M$.

\end{theorem}

Our methods also prove corresponding results for certain subclasses of geometric grid classes (Theorem \ref{thm:strongComp}), some of which were also asked about in \cite{albert2013geometric}. A little further analysis yields the computability of the basis for the substitution closure of a geometric grid class, studied in \cite{albert2015inflations}.

\begin{theorem} [Theorem \ref{thm:compSC}]
	Let $M$ represent a finite $0/$$\pm1$-matrix and $\Geom(M)$ the corresponding geometric grid class. Then the basis of the substitution closure of $\Geom(M)$ is computable from $M$.
\end{theorem}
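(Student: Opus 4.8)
The plan is to reduce the computation of the basis of $\scM$ to a finite search over simple permutations, with Theorem~1 supplying the needed length bound.

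\emph{Reduction to simple permutations.} The basis of any substitution-closed class consists entirely of simple permutations: a non-simple basis element $\beta$ would factor as $\sigma[\alpha_1,\dots,\alpha_k]$ with $k\ge 2$ and $\sigma,\alpha_1,\dots,\alpha_k$ all proper patterns of $\beta$, hence members of the class, forcing $\beta$ into the (substitution-closed) class. Applying this to $\scM$: a permutation $\beta$ lies in its basis iff $\beta$ is simple, $\beta\notin\Geom(M)$, and every proper pattern of $\beta$ lies in $\scM$. Now a permutation lies in $\scM$ exactly when every simple permutation labelling a node of its substitution decomposition tree lies in $\Geom(M)$ (in the nondegenerate case $12,21\in\Geom(M)$; the cases where $M$ is so trivial that $\Geom(M)$ omits $12$ or $21$ are read off from $M$ and disposed of directly, since then $\Geom(M)\subseteq\Av(12)$ or $\Geom(M)\subseteq\Av(21)$). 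Since every such node label is a pattern of the permutation, ``every proper pattern of $\beta$ lies in $\scM$'' becomes ``every simple permutation properly contained in $\beta$ lies in $\Geom(M)$''. Hence the basis of $\scM$ is precisely the set of simple permutations not in $\Geom(M)$ that are minimal with respect to containment among simple permutations.

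\emph{A computable length bound.} The crucial input is that $\Geom(M)$ contains only finitely many simple permutations, with a bound $L=L(M)$ on their lengths computable from $M$. This is where Theorem~1 enters: simplicity is expressible by a sentence of monadic second-order logic in the language of permutations --- a permutation of length $n$ is simple iff no set of positions that is simultaneously an interval of the position order and an interval of the value order has size strictly between $1$ and $n$ --- so the simple permutations of $\Geom(M)$ form a subclass of $\Geom(M)$ defined by an MSO sentence computable from $M$, and by Theorem~1 its generating function is computable and rational. Once one knows this set is finite, its generating function is a polynomial and $L(M)$ is its degree. The finiteness itself is a structural property of geometric grid classes, available from the theory developed in \cite{albert2013geometric,albert2015inflations} (alternatively, it can be obtained by a pumping argument on the regular word-encoding of $\Geom(M)$: a sufficiently long gridded permutation repeats some cell often enough that the resulting monotone run, together with the slope-$\pm1$ geometry of the cell, must contain a nontrivial interval). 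I expect establishing this finiteness with an effective bound to be the main obstacle; the remaining steps are routine.

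\emph{Finishing.} By the Schmerl--Trotter theorem, every simple permutation of length $n\ge 2$ contains a simple permutation of length $n-1$ or $n-2$. If $\beta$ is a basis element of $\scM$ of length $n$, this smaller simple permutation is a proper pattern of $\beta$ and therefore lies in $\Geom(M)$, so $n-2\le L(M)$; hence the basis of $\scM$ consists of simple permutations of length at most $L(M)+2$. The algorithm is then immediate: compute $L(M)$ from the generating function of the simple permutations of $\Geom(M)$; list the finitely many simple permutations $\beta$ of length at most $L(M)+2$; for each, decide whether $\beta\in\Geom(M)$ (decidable, e.g.\ by checking avoidance of the computable basis of $\Geom(M)$ given by Theorem~1) and whether every simple permutation properly contained in $\beta$ lies in $\Geom(M)$ (finitely many patterns, each check decidable); and output exactly the $\beta$ that fail the former test and pass the latter. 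By the reduction above this list is the basis of $\scM$.
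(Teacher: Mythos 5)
Your proof is correct, but it reaches the conclusion by a genuinely different route than the paper. The paper leans on the full strength of the cited result (Theorem \ref{thm:scBasis}): not only that the basis of the substitution closure consists of the minimal simple permutations outside $\Geom(M)$, but also that these all lie in $\Geom(M)^{+1} \cup \Geom(A)$ for a fixed matrix $A$; it then embeds everything into a larger grid class $\Geom(N)$ via Theorem \ref{thm:onepoint} and reruns the semi-decision procedure of Theorem \ref{thm:compBasis}, with an MSO sentence over $Th(\Geom(N))$ (restricted to simple permutations) certifying when the accumulated set is complete. You instead rederive the ``basis $=$ minimal simple non-members'' characterization directly (correctly, with the degenerate $12/21$ cases set aside), and replace the certification step by an explicit computable length bound: the simple permutations of $\Geom(M)$ form an MSO-definable subclass, so by the paper's earlier extension theorem their generating function is computable; since that set is finite (a theorem of \cite{albert2013geometric}, so your hedging about this being ``the main obstacle'' is unnecessary---only non-effective finiteness is needed, exactly as you use it), the generating function is a polynomial whose degree gives $L(M)$, and Schmerl--Trotter (which holds for simple permutations of length at least $4$, not $2$, but this is harmless here) bounds basis elements by $L(M)+2$, leaving a finite brute-force check. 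What each approach buys: yours avoids the matrix-$A$ containment and the decidability of $Th(\Geom(N))$ for this theorem, and yields an explicit, reasonably clean bound on the size of basis elements of the substitution closure (something the paper only remarks can be done, in the spirit of Proposition \ref{prop:bound}); the paper's approach avoids Schmerl--Trotter and the finiteness-of-simples theorem, staying entirely within its interpretation-plus-decidability machinery. One small point to tighten: when deciding $\beta \in \Geom(M)$ for a concrete finite $\beta$ you do not need the computed basis---membership of a fixed finite permutation in $\Geom(M)$ is directly decidable (e.g.\ by checking $\beta \models \Geom_M$)---though your route through the basis is also fine.
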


Our main tool is monadic second-order logic on permutations and on words. While monadic second-order logic on words is, in a formal sense, equivalent to working with regular languages, we find it much more tractable from a practical standpoint. While computing the basis of $\Geom(M)$ ultimately shows that $\Geom(M)$ is definable from a sentence of first-order logic that is computable from $M$, the key initial step is that $\Geom(M)$ is definable by a sentence of monadic second-order logic computable from $M$.

The particularly simple encoding of elements of $\Geom(M)$ as words is the key to their combinatorial properties,  and Proposition \ref{prop:qfinterp} shows that the existence of such an encoding actually characterizes subclasses of geometric grid classes, and similarly characterizes graph classes of bounded lettericity.

\section{Preliminaries}

Our preliminaries will be minimalist and sometimes informal. For a fuller treatment of permutations and geometric grid classes, we refer to \cite{albert2013geometric}, particularly the second section. For a fuller treatment of monadic second order logic, we refer to \cite{thomas1997languages}, particularly the second section, or \cite{torun}.

\subsection{Permutations and geometric grid classes} \label{sec:prePerm}

We define a {\em permutation} to be a set equipped with two linear orders $<_1$ and $<_2$. At least in the finite, this can be viewed as one order giving the ``standard'' order and the other giving the ``permuted'' order. Embeddings between permutations then agree with the standard notion of {\em containment} between permutations. Unless stated otherwise, permutations will be assumed to be finite.

A set of points in the plane such that no two points have the same $x$ or $y$-coordinate naturally gives rise to a permutation, with $<_1$ defined as the left-to-right order and $<_2$ as the bottom-to-top order. Since the permutation so defined is not sensitive to the exact coordinates of the points but only to their relative orderings, many such sets of points will represent the same permutation.

All matrices in this paper will be assumed to be finite and have entries in $\set{0, \pm 1}$. Given an $m \times n$ matrix $M$, we now define the {\em geometric grid class} $\Geom(M)$ associated to $M$. First, one considers an $m \times n$ grid of unit squares in the plane. In each cell, if the corresponding entry of $M$ is 1 then one draws an open line segment from the bottom left to top right corner of the cell, if the corresponding entry is $-1$  then one draws an open line segment from the top left to bottom right corner, and if the corresponding entry is 0 then the cell is left empty. These line segments are the {\em standard figure} of $M$. Then $\Geom(M)$ consists of all permutations that can be represented by points drawn on the standard figure. See Figure \ref{fig:geom} for an example.

\begin{figure}
	\includegraphics[scale=.65]{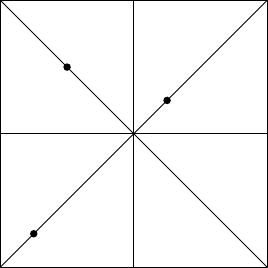}
	\caption{The standard figure for the matrix $\begin{pmatrix} -1 & 1 \\ 1 & -1 \end{pmatrix}$, with one possible gridding of the permutation $(132)$.}
	\label{fig:geom}
\end{figure}

We remark that, given $M$, it is not, in general, equivalent to consider the permutations that can be drawn in the cells of the grid corresponding to non-zero entries of $M$ so that the points in each cell are increasing if the entry of $M$ is 1 and decreasing if the entry of $M$ is $-1$. This is called the {\em monotone grid class} associated to $M$. See \cite{albert2013geometric}*{Figure 4} for an example where this differs from $\Geom(M)$.

Given a set $\BB$ of permutations, we let $\Av(\BB)$ be the set of permutations that do not embed any element of $\BB$. Since $\Geom(M)$ is closed under embeddings, there is some set $\BB$ such that $\Geom(M) = \Av(\BB)$. The set $\BB$ is called the {\em basis} of $\Geom(M)$, and consists of the embedding-minimal permutations not contained in $\Geom(M)$.

\begin{theorem}[\cite{albert2013geometric}*{Theorem 6.2}]
Every geometric grid class has a finite basis.
\end{theorem}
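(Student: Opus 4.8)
The plan is to derive the finite basis from a well-quasi-ordering (wqo) statement. Write $\BB$ for the basis of $\Geom(M)$, so that $\Geom(M) = \Av(\BB)$ and $\BB$ is an antichain under the containment order; I will exhibit a geometric grid class that is wqo and contains $\BB$, which forces the antichain $\BB$ to be finite. This rests on two ingredients: (i) every geometric grid class is wqo under containment, and (ii) the downward closure of the set of one-point extensions of any class contained in a geometric grid class is again contained in some geometric grid class.

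For (i) I would use the word-encoding approach of \cite{albert2013geometric}. After replacing $M$ by a suitable refinement so that its nonzero cells carry a consistent orientation and a coherent ``reading order'', one encodes a drawing of a permutation on the standard figure of $M$ by recording, in that order, the cell containing each point; this produces a word over the finite alphabet of nonzero cells, the set $L_M$ of words arising from genuine drawings is a regular language, and one obtains a well-defined surjection $\phi\colon L_M \to \Geom(M)$. Crucially, deleting a point of a drawing amounts to deleting the corresponding letter, so that $w' \le w$ in the subword order implies $\phi(w') \subseteq \phi(w)$ (a matching statement in the opposite direction also holds, but is not needed here). Hence an infinite antichain $\{\sigma_k\}$ in $\Geom(M)$ would lift, on choosing $w_k \in \phi^{-1}(\sigma_k)$, to an infinite antichain $\{w_k\}$ in $L_M$ --- if $w_i \le w_j$ then $\sigma_i \subseteq \sigma_j$ --- contradicting Higman's lemma, so $\Geom(M)$ is wqo. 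I expect the encoding set-up to be the main obstacle: a permutation has many drawings and many encodings, the cells of a single column need not be read from left to right, and the preliminary normalization of $M$ must be chosen precisely so that containment transfers faithfully to the subword order.

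For (ii), given a class $\CC$ inside $\Geom(M)$ one refines the grid --- subdividing each row and each column and filling in the new cells appropriately --- to a matrix $M'$ with enough slack that any single additional point placed in a drawing of a member of $\CC$ can be accommodated on a fresh monotone segment of the standard figure of $M'$; then every one-point extension of a member of $\CC$, and hence its entire downward closure, lies in $\Geom(M')$. To finish the argument: any $\beta \in \BB$ is nonempty, and removing any one point of $\beta$ yields a permutation strictly contained in $\beta$, which therefore lies in $\Geom(M)$ by minimality of $\beta$; thus $\beta$ is a one-point extension of a member of $\Geom(M)$. By (ii) the downward closure $D$ of all such one-point extensions lies in $\Geom(M')$ for a suitable $M'$, and by (i) applied to $M'$, $D$ is wqo. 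Since $\BB \subseteq D$ and $\BB$ is an antichain, $\BB$ is finite. (For this pure existence statement the regularity of $L_M$ is not needed; one could also obtain (i) from the automata-theoretic picture behind the monadic second-order approach of this paper, but monadic second-order definability on its own does not yield wqo --- the geometric structure is essential.)
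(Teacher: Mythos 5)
Your proof is correct and follows essentially the same route as the source of this statement: the paper does not prove it but cites \cite{albert2013geometric}*{Theorem 6.2}, whose argument likewise rests on the order-preserving word encoding $\phi_M$ (Proposition \ref{prop:phiFacts}), Higman's lemma giving well-quasi-order, and the one-point-extension containment $\Geom(M)^{+1} \subseteq \Geom(N)$ (Theorem \ref{thm:onepoint}), exactly the ingredients you assemble. Your observation that each basis element is a one-point extension of a member of $\Geom(M)$, hence lies in a wqo class where the basis antichain must be finite, is the standard and correct way to combine them.
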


We now continue to some technical points about geometric grid classes that we will use.

A matrix $M$ is a {\em partial multiplication matrix} if we can assign every column and every row either 1 or $-1$ (called the {\em column signs} and {\em row signs}) such that every non-zero entry of $M$ equals the product of its row and column signs. By \cite{albert2013geometric}*{Proposition 4.2} every geometric grid class $\Geom(M)$ is the geometric grid class of a partial multiplication matrix computable from $M$; so we lose no generality in assuming our geometric grid classes are always presented via a partial multiplication matrix. Given a partial multiplication matrix, to every non-zero cell in the corresponding grid we can assign one corner (which will be one of the endpoints of cell's line segment in the standard figure) to be its {\em origin} as follows: if the row sign is 1 (resp. $-1$), then the origin is on the bottom (resp. top), and if the column sign is 1 (resp. $-1$) then the origin is on the left (resp. right).

Associated to a matrix $M$, we now also define the class $\Geom^\sharp(M)$. This consists of permutations additionally equipped with unary relations $C_1, \dots, C_n$ corresponding to the non-zero cells of $M$ such that the underlying permutation is in $\Geom(M)$, the unary relations partition the domain, and the underlying permutation can be drawn on the standard figure of $M$ in a manner agreeing with the unary relations. We will refer to elements of $\Geom^\sharp(M)$ as {\em $M$-gridded permutations}.

Given a partial multiplication matrix $M$ and a choice of row and column signs, let $\Sigma = \set{C_1, \dots, C_n}$ be its set of non-zero cells. Let $\Sigma^*$ be the set of finite words in the alphabet $\Sigma$, with embeddings given by the subword (rather than factor) relation. We now define a map $\phi_M^\sharp \colon \Sigma^* \to \Geom^\sharp(M)$ as follows: given $w \in \Sigma^*$, the $i^{th}$ letter of $w$ is mapped to a point on the standard figure of $M$, into the cell agreeing with the letter; furthermore, the distance each point is placed from the origin of its cell is strictly increasing with respect to the index in $w$. It is straightforward to check this entirely determines $\phi^\sharp$. Similarly, we define $\phi_M \colon \Sigma^* \to \Geom(M)$ by taking the underlying permutation of the $\phi^\sharp$-image. See \cite{albert2013geometric}*{Figure 5} for an example.

\begin{proposition}[\cite{albert2013geometric}, Proposition 5.1] \label{prop:phiFacts}
	The mappings $\phi_M$ and $\phi_M^\sharp$ are length-preserving, finite-to-one, onto, and order-preserving with respect to embeddings.
\end{proposition}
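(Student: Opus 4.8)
The plan is to establish the four properties separately: \emph{length-preserving} and \emph{finite-to-one} are immediate, \emph{onto} carries the real content, and \emph{order-preserving} then follows from the same structural observation used for \emph{onto}.

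\emph{Length-preserving and finite-to-one.} A word of length $k$ is sent by $\phi_M^\sharp$ to a configuration of exactly $k$ drawn points, hence to an $M$-gridded permutation on $k$ elements, and similarly for $\phi_M$; so both maps preserve length. Consequently any $\phi_M^\sharp$-preimage of a $k$-element gridded permutation is one of the $|\Sigma|^k$ words of length $k$ over the finite alphabet $\Sigma$, and the same bound applies to $\phi_M$-preimages of a $k$-element permutation, giving finite-to-one.

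\emph{A normal form.} I would first record the key structural fact. Fix the row and column signs of $M$, and for a point drawn on a cell's segment let its \emph{parameter} be its normalized distance from the origin of that cell, a number in $(0,1)$. Within any fixed column, every cell-segment joins a left corner to a right corner and all the cells share the same column sign, so the $x$-coordinate of a point in that column is a strictly monotone function of its parameter --- increasing if the column sign is $+1$ and decreasing if it is $-1$ --- with the \emph{same} direction for every cell of the column; symmetrically the $y$-coordinate of a point in a fixed row is a strictly monotone function of its parameter, with direction given by the row sign. Hence, for any two drawn points, their relative $<_1$-order is fixed by the column order if they lie in different columns and by the column sign together with the order of their parameters if they lie in the same column, and symmetrically for $<_2$, rows and row signs. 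Since two points always lie in either different or equal columns, and in either different or equal rows, the $M$-gridded permutation carried by a drawing depends only on the assignment of points to cells and on the linear order of the points by parameter. (In particular $\phi_M^\sharp(w)$ is the gridded permutation whose $i$th point lies in cell $w_i$ with the $i$th smallest parameter, which re-proves that $\phi_M^\sharp$ is well defined.)

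\emph{Onto.} Let $\pi \in \Geom^\sharp(M)$ and fix a drawing of $\pi$ on the standard figure of $M$ agreeing with the cell relations of $\pi$; perturbing the drawn points slightly along their segments (which changes neither $\pi$ nor its gridding) we may assume all parameters are distinct. List the points $p_1, \dots, p_k$ in increasing order of parameter and let $w$ be the word whose $i$th letter names the cell of $p_i$. Then the drawing and the configuration $\phi_M^\sharp(w)$ have the same cell assignment and the same parameter order, so by the normal-form observation $\phi_M^\sharp(w) = \pi$; thus $\phi_M^\sharp$ is onto. For $\phi_M$: every $\sigma \in \Geom(M)$ is the underlying permutation of some element of $\Geom^\sharp(M)$ (take a drawing of $\sigma$ and label each point by the cell whose open segment contains it), so $\phi_M$ is onto as the composite of $\phi_M^\sharp$ with the onto forgetful map $\Geom^\sharp(M) \to \Geom(M)$.

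\emph{Order-preserving and the main obstacle.} If $u$ is a subword of $v$ via indices $i_1 < \dots < i_k$, so that $u_\ell = v_{i_\ell}$, then the points of $\phi_M^\sharp(v)$ indexed $i_1, \dots, i_k$ form a sub-configuration, and restricting to it is an embedding of $M$-gridded permutations whose image has cell labels $u_1, \dots, u_k$ with parameters in increasing order; by the normal-form observation this image is exactly $\phi_M^\sharp(u)$, so $\phi_M^\sharp(u) \hookrightarrow \phi_M^\sharp(v)$ respecting cell relations, and forgetting the cell relations gives $\phi_M(u) \hookrightarrow \phi_M(v)$. Everything thus reduces to the normal-form observation, and the one place the partial-multiplication-matrix hypothesis is used --- and hence the main point to get right --- is that the direction in which a coordinate varies with the parameter depends only on the relevant row or column sign, uniformly over all cells of that row or column.
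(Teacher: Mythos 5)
This proposition is quoted from \cite{albert2013geometric} without proof in the present paper, so there is no internal argument to compare against; your write-up is a correct, self-contained proof and follows essentially the same route as the cited source. The crux is exactly where you place it: because $M$ is a partial multiplication matrix, each coordinate of a drawn point is a strictly monotone function of its distance-to-origin parameter with direction depending only on the relevant row or column sign, uniformly over the cells of that row or column, and surjectivity and order-preservation both reduce to the resulting observation that a gridded permutation is determined by the cell assignment together with the parameter order.
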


\subsection{Monadic second order logic} \label{sec:preMSO}
We begin by fixing a language $\LL$, consisting of relation symbols with fixed arities, and always assumed to contain equality. For example, the language of permutations is $\set{<_1, <_2}$, where both relations are binary. We then consider well-formed expressions that can be built using variables, the relations of the language, boolean connectives, and quantifiers. In monadic second-order logic (MSO), variables are  allowed to represent both elements and subsets (i.e. unary relations) of the underlying structure. Following standard notation, lowercase variables will correspond to elements while uppercase variables will correspond to sets. Such a well-formed expression is called a {\em formula}, and a {\em sentence} is a formula where every variable is in the scope of some quantifier.

For example, a permutation is {\em sum-indecomposable} if it cannot be partitioned into two sets such that one set is entirely above the other in both orders. This can be expressed by a sentence Sum-Ind defined as follows.
\begin{align*}
\neg \exists X, Y (&\forall x ((x \in X \vee x \in Y) \wedge \neg (x \in X \wedge x \in Y)) \wedge \\
&\forall x, y ((x \in X \wedge y \in Y) \ra (x <_1 y \wedge x <_2 y)))
\end{align*}

As seen in the above example, we will often abuse notation and, when $X$ is a unary relation, write $c \in X$ for $X(c)$.

Similarly, a permutation is {\em skew-indecomposable} if it cannot be partitioned into two sets $X, Y$ such that $X <_1 Y$ and $X >_2 Y$, and this can be expressed by a similar sentence.

Given a sentence $\theta$ and a structure $M$ (in the same relational language), we write $M \models \theta$ (read ``$M$ satisfies $\theta$'') if $\theta$ is true when interpreted in $M$. For example, if $\pi$ is a permutation, then $\pi \models$ Sum-Ind if and only if $\pi$ is sum-indecomposable. That is, Sum-Ind {\em defines} the set of sum-indecomposable permutations. (A formal treatment of formulas and of the satisfaction relation $\models$ requires involved inductions, and should be included in any standard treatment of MSO, e.g. \cite{torun}*{\S 2}.)

A {\em theory} is a set of sentences (in a fixed language). Given a set of structures $\CC$, the {\em theory of $\CC$} is $Th(\CC) := \set{\theta | \forall M \in \CC, M \models \theta}$.

We now turn to words in MSO. Given an alphabet $\Sigma$, the language of $\Sigma$-words is $\set{<, \set{U_\sigma}_{\sigma \in \Sigma}}$, where $<$ is a binary relation and each $U_\sigma$ is unary. A word is then a set equipped with these relations, where $<$ is a linear order and the unary relations partition the domain. Embeddings between words viewed as such structures corresponds to the subword relation. The set of all $\Sigma$-words will be denoted $\Sigma^*$.

The following theorem underpins the decidability results of this paper.

\begin{theorem}[\cite{torun}*{Theorem A.4}] \label{thm:MSOreg}
	Fix a finite alphabet $\Sigma$. Then $L \subset \Sigma^*$ is a regular language if and only if there is a sentence $\theta$ (in the language of $\Sigma$-words) such that $L = \set{w \in \Sigma^* | w \models \theta}$. Furthermore, the translations in both directions between a finite automaton accepting $L$ and such a sentence $\theta$ are computable.
\end{theorem}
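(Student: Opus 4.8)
The plan is to prove the two directions of the equivalence separately, keeping track of effectivity throughout since the statement asserts a \emph{computable} translation in each direction. Both directions are instances of the classical Büchi–Elgot–Trakhtenbrot correspondence, so the task is really to assemble the standard argument in a way that makes the computability addendum transparent.

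\textbf{From automata to sentences.} Given a finite automaton $A = (Q, \Sigma, \delta, q_0, F)$ recognizing $L$ (which we may assume deterministic, determinization being effective), I would write down an MSO sentence $\theta_A$ in the language of $\Sigma$-words that existentially quantifies over an accepting run. Concretely, introduce set variables $(X_q)_{q \in Q}$, intended so that a position $i$ lies in $X_q$ exactly when the state of $A$ after reading the $i$-th letter is $q$. The sentence then asserts: (i) the $X_q$ partition the domain; (ii) the $<$-least position, if it lies in $U_\sigma$, lies in $X_{\delta(q_0,\sigma)}$; (iii) whenever a position lies in $X_q$ and its immediate $<$-successor lies in $U_\sigma$, the successor lies in $X_{\delta(q,\sigma)}$; and (iv) the $<$-greatest position lies in $\bigcup_{q \in F} X_q$. ``Immediate successor'', ``least'', and ``greatest'' are first-order definable from $<$, and the empty word is covered by a separate disjunct (accept iff $q_0 \in F$). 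Reading $\theta_A$ off from $A$ is plainly mechanical, so this translation is computable, and $w \models \theta_A$ iff $A$ accepts $w$ by construction.

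\textbf{From sentences to automata.} This is the substantive direction and proceeds by induction on formula structure. First I would reduce to formulas all of whose free variables are set variables, by replacing each free first-order variable $x$ with a set variable constrained to be a singleton. A formula $\varphi(X_1,\dots,X_m)$ is then interpreted over the extended alphabet $\Sigma \times \{0,1\}^m$: a word over this alphabet encodes a $\Sigma$-word together with an assignment of subsets to $X_1,\dots,X_m$. The induction produces, for each such $\varphi$, a finite automaton over $\Sigma \times \{0,1\}^m$ recognizing exactly the encodings of models of $\varphi$. Atomic formulas ($x=y$, $x<y$, $U_\sigma(x)$, $x \in X_j$, plus the singleton constraints) are recognized by small explicit automata; conjunction is handled by the product construction after cylindrifying both automata to a common alphabet; negation is handled by first applying the subset construction to determinize and then complementing the accepting set; and existential set quantification $\exists X_j\, \varphi$ is handled by projecting away the $X_j$-coordinate, yielding a possibly nondeterministic automaton. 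Universal quantifiers are rewritten as $\neg\exists\neg$. For a sentence the extended alphabet collapses to $\Sigma$, so the final automaton recognizes precisely $\{w : w \models \theta\}$. Every operation invoked — building the base automata, cylindrification, product, subset construction, projection, complementation — is effective, so the translation is computable. (An alternative route via Ehrenfeucht–Fraïssé games and composition of bounded-quantifier-rank MSO-theories of words also works, but the automaton-theoretic induction is more transparent for the effectivity claim.)

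\textbf{Main obstacle.} The automata-to-sentences direction is entirely routine. The work in the sentences-to-automata direction is organizational rather than deep: carefully setting up the encoding of variable assignments into extended alphabets, the cylindrification that lets $\wedge$ combine automata over differing free-variable sets, and fixing conventions for degenerate cases (the empty word, and whether the singleton constraints on former first-order variables are enforced syntactically or tracked separately) so that the base cases and the quantifier steps line up. The one genuinely non-trivial closure property is complementation, which — because nondeterministic automata are not directly closed under complement — forces the subset construction into the argument; verifying its correctness and its effectivity is where essentially all the content of the proof resides.
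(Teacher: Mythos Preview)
The paper does not prove this theorem: it is stated as a preliminary result with a citation to \cite{torun}*{Theorem A.4} and is used as a black box thereafter. Your proposal correctly reproduces the classical B\"uchi--Elgot--Trakhtenbrot argument, with both the run-encoding direction and the inductive automaton construction laid out accurately and with the effectivity bookkeeping made explicit; there is simply no in-paper proof to compare it against.
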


Since we can computably check whether a finite automaton accepts all strings, we obtain the following.

\begin{corollary} [\cite{torun}*{Theorem A.5}] \label{cor:s1s}
	Let $\Sigma$ be a finite alphabet. Then given any sentence $\theta$, it is computable whether $\theta \in Th(\Sigma^*)$.
\end{corollary}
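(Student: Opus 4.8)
The plan is to reduce the question to the classical, decidable \emph{universality} problem for finite automata. Observe first that, for a sentence $\theta$ in the language of $\Sigma$-words, the condition $\theta \in Th(\Sigma^*)$ says exactly that every $w \in \Sigma^*$ satisfies $\theta$; equivalently, the language $L_\theta := \set{w \in \Sigma^* : w \models \theta}$ equals all of $\Sigma^*$. So it suffices to decide, from $\theta$, whether $L_\theta = \Sigma^*$.

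Next I would invoke Theorem~\ref{thm:MSOreg}: from $\theta$ one computes a finite automaton $A$ with $L(A) = L_\theta$. Hence $\theta \in Th(\Sigma^*)$ if and only if $L(A) = \Sigma^*$, and it remains to check this effectively. For that I would appeal to the standard fact that emptiness (and hence universality) of finite automata is decidable: determinize $A$ by the subset construction to obtain a DFA $A'$ with $L(A') = L(A)$; complement $A'$ by exchanging accepting and non-accepting states to obtain a DFA $A''$ with $L(A'') = \Sigma^* \setminus L(A)$; and finally test, by a graph-reachability computation in the transition graph of $A''$, whether some accepting state of $A''$ is reachable from its start state. This reachability test succeeds precisely when $L(A'') \neq \emptyset$, i.e.\ when $L(A) \neq \Sigma^*$; so its failure is exactly the condition $\theta \in Th(\Sigma^*)$. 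Every step in this chain is effective, so the overall procedure is computable.

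There is essentially no serious obstacle: the statement is a direct corollary of Theorem~\ref{thm:MSOreg} together with the decidability of emptiness for finite automata, as the sentence preceding it indicates. The only points needing (routine) care are that the automaton produced by Theorem~\ref{thm:MSOreg} may be nondeterministic — which the subset construction handles at the cost of an exponential blow-up that does not affect computability — and the degenerate case of the empty word, which is accounted for automatically by whether the start state of $A''$ is accepting.
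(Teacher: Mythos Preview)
Your proposal is correct and follows essentially the same approach as the paper: the paper derives the corollary directly from Theorem~\ref{thm:MSOreg} together with the observation that one can computably check whether a finite automaton accepts all strings, which is exactly the universality test you spell out in more detail.
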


When the conclusion of Corollary \ref{cor:s1s} holds with a class $\CC$ in place of $\Sigma^*$, we will say the theory of $\CC$ is {\em decidable}. We now introduce a method to reduce decidability results from one class to another. Although the MSO theory of all permutations is undecidable (e.g., see the comments about ``two-dimensional words'' in \cite{thomas1997languages}),  this will yield the decidability of the theory of each geometric grid class. Let $\CC$ and $\DD$ be two classes of finite structures, possibly in different relational languages $\LL_C$ and $\LL_D$. An {\em interpretation of $\CC$ in $\DD$} is a surjective map $I \colon \DD \to \CC$ such that there is a uniform definition of $I(D)$ from $D$ for every $D \in \DD$, i.e. for every relation $S(x_1, \dots, x_n)$ in $\LL_\CC$, there is an $\LL_\DD$-formula $R_S(x_1, \dots, x_n)$ such that for every $D \in \DD$, we have that $I(D)$ is isomorphic to the $\LL_C$-structure obtained by taking the domain of $D$ and interpreting $S$ as the relation $\set{(d_1, \dots, d_n) \in D^n | D \models R_S(d_1, \dots, d_n)}$. (For simplicity, our definition of interpretation is much more restrictive than usual. In our case, each $R_S$ will be quantifier-free and we are working with hereditary classes, so the usual extra freedom to define a subdomain of $D$ is not needed.)

For a simple example, recall that the {\em inversion graph} of a permutation $\pi$ is the graph with the same domain that places an edge between $x$ and $y$ if and only if $<_1$ and $<_2$ disagree between $x$ and $y$. If $\DD$ is a class of permutations and $\CC$ the corresponding class of inversion graphs, then $\CC$ is interpretable in $\DD$ and the formula $R_E(x, y)$ defining the edge relation $E$ is $R_E(x,y) := (x <_1 y \wedge x >_2 y) \vee (x <_2 y \wedge x >_1 y)$.

\begin{theorem}[\cite{torun}*{Theorem B.1}] \label{thm:interpRed}
	Let $\CC$ and $\DD$ be two classes of finite structures in languages $\LL_\CC$ and $\LL_\DD$ with $I \colon \DD \to \CC$ an interpretation. Then for every $\LL_\CC$-sentence $\theta$, there is an $\LL_\DD$-sentence $\theta'$ computable from $\theta$ and $I$ such that $\theta \in Th(\CC) \iff \theta' \in Th(\DD)$.
	
	 In particular, if $Th(\DD)$ is decidable, then so is $Th(\CC)$.
\end{theorem}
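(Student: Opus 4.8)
The plan is the standard syntactic translation of sentences along the interpretation. The key simplification is that the notion of interpretation in play here is deliberately restrictive: $I(D)$ is (up to isomorphism) carried by the \emph{same} underlying set as $D$, with each $\LL_\CC$-relation $S$ reinterpreted via the given $\LL_\DD$-formula $R_S$. Because the two domains coincide, there is no definable subdomain to relativize to and no definable quotient to pass to, so both the element- and the set-quantifiers can be translated homomorphically. Concretely, regard the data of $I$ as the finite tuple of formulas $(R_S)_{S \in \LL_\CC}$.

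First I would define, by recursion on an arbitrary $\LL_\CC$-formula $\psi$, an $\LL_\DD$-formula $\psi^I$ with the same free variables of both sorts: set $(S(x_1, \dots, x_n))^I := R_S(x_1, \dots, x_n)$ for each relation symbol $S$ of $\LL_\CC$ (after the usual renaming of the bound variables of $R_S$ to avoid capture), keep equality as equality, and let $(\cdot)^I$ commute with $\neg$, $\wedge$, $\vee$, with element-quantifiers ($(\exists x\,\psi)^I := \exists x\,\psi^I$), and with set-quantifiers ($(\exists X\,\psi)^I := \exists X\,\psi^I$). This is manifestly a computable procedure in $\psi$ and $(R_S)_{S \in \LL_\CC}$; put $\theta' := \theta^I$.

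Next I would prove the correctness lemma by induction on $\psi$. Given $D \in \DD$, write $D^I$ for the $\LL_\CC$-structure on the domain of $D$ interpreting each $S$ as $\set{\bar d | D \models R_S(\bar d)}$, so that $D^I \iso I(D)$ by the definition of interpretation. Then for every $\LL_\CC$-formula $\psi(\bar x, \bar X)$ and all tuples $\bar a$ of elements and $\bar A$ of subsets of that common domain,
\[
D \models \psi^I(\bar a, \bar A) \iff D^I \models \psi(\bar a, \bar A).
\]
The atomic case for a relation symbol of $\LL_\CC$ is exactly the definition of $D^I$; equality and the boolean connectives are immediate; and the two quantifier cases hold because $D$ and $D^I$ have literally the same underlying set, so $\exists x$ (resp.\ $\exists X$) in $\psi$ and in $\psi^I$ ranges over the same elements (resp.\ subsets). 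Specializing to sentences and using that $\models$ is invariant under isomorphism gives $D \models \theta' \iff I(D) \models \theta$ for every $D \in \DD$.

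Finally, since $I$ is surjective, every $C \in \CC$ is $I(D)$ for some $D \in \DD$, whence
\[
\theta \in Th(\CC) \iff \forall D \in \DD,\ I(D) \models \theta \iff \forall D \in \DD,\ D \models \theta' \iff \theta' \in Th(\DD),
\]
which is the claim; and when $Th(\DD)$ is decidable, deciding $\theta \in Th(\CC)$ amounts to computing $\theta'$ and testing it for membership in $Th(\DD)$. I do not expect a genuine obstacle, precisely because the restrictive definition of interpretation sidesteps the usual complications of syntactic interpretations (definable subdomains, quotients by definable equivalence relations, tuples of variables for higher-dimensional interpretations). The one step that actually relies on the restriction — and the place where care would be needed in a more general MSO setting — is the translation of set-quantifiers: it is valid only because $I(D)$ lives on the full domain of $D$, so that a subset of one is a subset of the other; with a proper definable subdomain one would instead have to relativize every second-order quantifier to subsets of that subdomain.
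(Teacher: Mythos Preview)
Your proposal is correct and matches the paper's approach exactly: the paper does not prove this cited result in detail but simply remarks that $\theta'$ is obtained by replacing each $\LL_\CC$-relation by the $\LL_\DD$-formula defining it in $I$, which is precisely your recursive translation $(\cdot)^I$. Your write-up just fills in the routine induction that the paper leaves implicit.
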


Given $\theta$, the sentence $\theta'$ in the theorem above is obtained by replacing all occurrences of the relations of $\LL_\CC$ by the $\LL_\DD$-formulas defining them in the interpretation $I$.

\subsection{Why MSO? \nopunct} \label{sec:why} In light of Theorem \ref{thm:MSOreg} showing the equivalence between regular languages, which were the basic tool of \cite{albert2013geometric}, and MSO on words, one may wonder what is gained by using MSO instead. First, MSO may be seen as a high-level language that Theorem \ref{thm:MSOreg} ``compiles'' into an automaton, making it easier to describe complicated regular languages. A formal manifestation of this is the result that in general, the size of the finite automaton corresponding to an MSO sentence might not even be elementary (i.e. bounded above by a tower function) in the length of the sentence \cite{reinhardt2002complexity}*{Theorem 13.1}, although we will see the blow-up is more controlled in our setting.

A second reason is that working with MSO allows us to reason directly with permutations in many cases, while the work of translating back to words is abstracted away into an interpretation. And since the permutations in a geometric grid class have natural distinguished subsets corresponding to the cells of various griddings, MSO is particularly well-suited to them.

\subsection{Notation} For the rest of the paper, $M$ will represent a (not necessarily fixed) partial multiplication matrix with a computed assignment of row/column signs. We let  $\Sigma = \set{C_1, \dots, C_n}$ be the non-zero cells of $M$, with $m_i = \pm 1$ as the entry of $C_i$ (so $\Sigma$, $n$, and each $m_i$ all silently depend on $M$). We let $\LL_P = \set{<_1, <_2}$ be the language of permutations, $\LL_\Sigma = \set{< , C_1, \dots, C_n}$ be the language of $\Sigma$-words, and $\LL_{P,M} = \set{<_1, <_2, C_1, \dots, C_n}$ where each $<_i$ is binary and each $C_i$ is unary be the language of $M$-gridded permutations.

\section{Computing the basis of Geom(M)} \label{sec:basis}
In this section, we compute the basis of $\Geom(M)$ and afterwards show we can bound the length of the basis elements. A similar approach to ours was developed in \cites{lagergren, cattell2000computing} to compute the minimal forbidden minors of certain minor-closed graph classes of bounded tree-width. Our setting is simpler, since we reduce to the classical theory of regular languages of words, rather than using Courcelle's theory of graph grammars.

\subsection{An MSO definition of Geom(M)}

In this subsection, we will describe an $\LL_P$-sentence $\Geom_M$ defining $\Geom(M)$.

We first write an $\LL_P$-formula $\Mon_M(X_1, \dots, X_n)$ such that the sentence  $\exists Y_1, \dots, Y_n \Mon_M(Y_1, \dots, Y_n)$ defines the monotone grid class of $M$. Let $\Mon_M(X_1, \dots, X_n)$ be the conjunction of the following clauses.
\begin{enumerate}
	\item (The cells form a partition) $\forall x( \bigvee_{i \in [n]} x \in X_i \wedge \bigwedge_{i\neq j \in [n]} (x \in X_i \ra x \not\in X_j))$
	\item (Each cell is increasing or decreasing as specified) For each $i \in [n]$ with $m_i = 1$ (resp. $m_i= -1)$: $\forall x,y (x, y \in X_i \wedge x < _1 y) \ra x <_2 y$ (resp. $\forall x,y (x, y \in X_i \wedge x < _1 y) \ra x >_2 y$)
	\item (Relations between cells) For every $i, j \in [n]$ with $C_i$ left of (resp. below) $C_j$: $\forall x,y (x \in X_i \wedge y \in X_j) \ra x <_1 y$ (resp. $\forall x,y (x \in X_i \wedge y \in X_j) \ra x <_2 y$)
\end{enumerate}

We next define a relation $D(X_1, \dots X_n; x, y)$. This relation will eventually appear with the set-variables quantified over, and so should be thought of as a binary relation on singletons; the idea is that it will hold of $x, y$ if they are in the same row or column as specified by the set-variables and $x$ is closer to its cell's origin than $y$ to its cell's origin. Let $D(X_1, \dots X_n; x, y)$ be the disjunction of the following clauses.
\begin{enumerate}
	\item For each $i,j \in [n]$ (allowing $i = j$) such that $C_i, C_j$ are in the same row and the row sign is 1 (resp. $-1$): $x \in X_i \wedge y \in X_j \wedge x <_2 y$ (resp. $x \in X_i \wedge y \in X_j \wedge x >_2 y$)
	\item For each $i,j \in [n]$ (allowing $i = j$) such that $C_i, C_j$ are in the same column and the column sign is 1 (resp. $-1$): $x \in X_i \wedge y \in X_j \wedge x <_1 y$ (resp. $x \in X_i \wedge y \in X_j \wedge x >_1 y$)
\end{enumerate}

Given a binary relation $R(x, y)$, we write a sentence $\mathrm{Acyc}_R$ specifying (when interpreted in a finite structure) that $R$ does not have a directed cycle. Let $\mathrm{Acyc}_R := \neg \exists X \forall x \in X (\exists y \in X R(y, x) \wedge \exists z \in X R(x, z))$.

Finally, we define the $\LL_P$-sentence $\Geom_M$ as follows.
\[ \Geom_M := \exists Y_1, \dots, Y_n (\Mon_M(Y_1, \dots, Y_n) \wedge \mathrm{Acyc}_{D(Y_1, \dots, Y_n; x, y)}) \]

We remark that Clause (2) of $\Mon_M(Y_1, \dots, Y_n)$ is in a sense redundant in $\Geom_M$, since it is implied by $\mathrm{Acyc}_{D(Y_1, \dots, Y_n; x, y)}$. That is, if there were two points in $Y_i$ that went against the direction of $C_i$, then they would form a directed $D$-cycle of size 2. Thus Clause (2) of $\Mon_M(Y_1, \dots, Y_n)$ will not be invoked in the proof of the following proposition.

\begin{proposition} \label{prop:msoDef}
Let $\pi$ be a finite permutation. Then $\pi \in \Geom(M) \iff \pi \models \Geom_M$.
\end{proposition}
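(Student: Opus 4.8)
The plan is to prove both directions by analyzing when a permutation can be drawn on the standard figure of $M$, using the mapping $\phi_M^\sharp$ from Proposition~\ref{prop:phiFacts} to pass between words and gridded permutations.

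For the forward direction ($\pi \in \Geom(M) \Rightarrow \pi \models \Geom_M$), suppose $\pi \in \Geom(M)$. By definition $\pi$ is represented by a set of points on the standard figure of $M$; recording which cell each point falls into gives an $M$-gridded permutation $\pi^\sharp \in \Geom^\sharp(M)$ whose underlying permutation is $\pi$, and we take $Y_i$ to be the set of points in cell $C_i$. The clauses of $\Mon'_M$ then hold essentially by inspection: Clause (1) because the cells partition the points, Clause (2) because within a cell the points lie on a line segment of the prescribed slope, and Clause (3) because the cells occupy disjoint columns/rows of the grid in the stated left-of/below relationships. For $\mathrm{Acyc}_{D(Y_1,\dots,Y_n;x,y)}$: the relation $D$ holds of $x,y$ exactly when, along the line segments of the standard figure, $x$ is strictly closer to the origin of its cell than $y$ is to the origin of its cell, \emph{among pairs in a common row-or-column}. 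Since "distance from origin" along the relevant coordinate is a well-defined real number for each point and $D(x,y)$ forces this number for $x$ to be strictly less than that for $y$, the relation $D$ is contained in a strict linear (pre)order on the finite point set, hence has no directed cycle. Thus $\pi \models \Geom_M$.

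For the reverse direction ($\pi \models \Geom_M \Rightarrow \pi \in \Geom(M)$), fix sets $Y_1,\dots,Y_n$ witnessing $\Geom_M$. The $Y_i$ partition $\pi$ and satisfy the $\Mon'_M$ clauses, so $\pi$ together with the $Y_i$ (interpreting $C_i := Y_i$) is a monotone-gridded permutation. The key step is to produce from the acyclicity of $D$ a \emph{global} linear order on the points that is simultaneously compatible, in each nonzero row and each nonzero column, with the "distance from origin" order prescribed by the signs — because such a global order is exactly the data of a word $w \in \Sigma^*$ with $\phi_M^\sharp(w) = (\pi, Y_1,\dots,Y_n)$, and then $\pi = \phi_M(w) \in \Geom(M)$ by the definition of $\Geom(M)$ via the standard figure. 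Concretely: since $D$ is acyclic on the finite set of points, take any linear extension $\preceq$ of the transitive closure of $D$. Reading the points of $\pi$ in $\preceq$-order, and recording for each the cell it lies in, produces a word $w \in \Sigma^*$. One then checks that $\phi_M^\sharp(w)$, which places the $k$th point of $w$ at $\preceq$-rank-determined distance from the origin of its cell, reproduces exactly the orders $<_1, <_2$ of $\pi$: within a single cell this is forced by Clause (2) together with the fact that $\preceq$ extends $D$ (which within one cell is precisely the origin-distance order); for points in distinct cells that share a row or column, the $\preceq$-order again agrees with $D$ hence with the required origin-distance order, which determines the relevant coordinate; and for points whose cells share neither a row nor a column, Clause (3) already pins down both coordinates' relative order from the grid geometry alone. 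Hence $\phi_M^\sharp(w) = (\pi, Y_1, \dots, Y_n)$ and $\pi \in \Geom(M)$.

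I expect the main obstacle to be the reverse direction, specifically the bookkeeping in the last step: verifying that the word $w$ obtained from the linear extension $\preceq$ actually has $\phi_M^\sharp(w)$ equal to $(\pi, Y_1,\dots,Y_n)$ and not merely order-isomorphic in some weaker sense. The subtlety is that $\phi_M^\sharp$ places points by distance-from-origin in \emph{each} cell independently, so one must confirm that the single order $\preceq$ restricts correctly to every row and every column at once — this is exactly what acyclicity of $D$ buys us (it makes all these per-row/per-column constraints mutually consistent, so a common linear extension exists), but making the case analysis airtight across the three cases (same cell / shared row-or-column / neither) is where the care is needed. It is also worth noting explicitly, as the paper remarks, that Clause (2) of $\Mon'_M$ is not actually needed here since a violation would give a $2$-cycle in $D$; I would either invoke that remark or simply not use Clause (2) in the argument.
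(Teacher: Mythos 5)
Your proposal is correct and follows essentially the same route as the paper: the forward direction by reading off a gridding, and the reverse direction by taking a linear extension of the acyclic relation $D$, forming the corresponding word, and checking via the same three-way case analysis (Clause (3) for cells in distinct rows and columns, $D$ for cells sharing a row or column) that $\phi_M^\sharp$ of that word reproduces $\pi$. The only cosmetic difference is that you aim for literal equality $\phi_M^\sharp(w) = (\pi, Y_1,\dots,Y_n)$, whereas an isomorphism of permutations suffices (and is all the paper proves), since $\Geom(M)$ is closed under isomorphism.
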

\begin{proof}
	$(\Ra)$ Let $\pi \in \Geom(M)$, and consider a particular gridding $\pi^\sharp$. Then if we interpret $Y_i$ as the set of points of $\pi^\sharp$ in $C_i$, this will witness the existentials in $\Geom_M$.
	
	$(\La)$ Suppose $\pi \models \Geom_M$. Then there exist $Y_1, \dots, Y_n \subset \pi$ as specified. In particular, $D(Y_1, \dots, Y_n; x,y)$ defines a directed acyclic graph, and so it can be completed to a linear order $\prec$. Consider the word $w_\pi$ of length $|\pi|$, whose $i^{th}$ element is $C_i$ if the $i^{th}$ element of $\pi$ in the $\prec$-order is in $Y_i$ (this is well-defined by Clause (1) of $\Mon_M(Y_1, \dots, Y_n)$). Then $\pi' = \phi_M^\sharp(w_\pi)$ is an $M$-gridded permutation, and we claim it is isomorphic (as a permutation) to $\pi$.
	
	Given $x \in \pi$, let $x'$ be the the corresponding element of $\pi'$, i.e. if $x$ is the $i^{th}$ element in the $\prec$-order of $\pi$, then $x'$ is the image of the $i^{th}$ element of $w_\pi$. We will show $x \mapsto x'$ is an isomorphism. Let $x, y \in \pi$. Suppose $x <_1 y$. Let $Y_i, Y_j$ be such that $x \in Y_i, y \in Y_j$. Then $x' \in C_i, y' \in C_j$. If $C_i$ and $C_j$ are in different columns, then Clause (3) in $\Mon_M(Y_1, \dots, Y_n)$ forces that $C_i$ is left of $C_j$. So $x' <_1 y'$.
	
	So suppose that $C_i$ and $C_j$ are in the same column. We further assume the column sign is 1, since the case with column sign $-1$ is symmetric. Since $x <_1 y$, Clause (2) in $D(Y_1, \dots, Y_n ; x,y)$ implies that $\pi \models D(Y_1, \dots, Y_n; x,y)$, and so $x \prec y$. Thus $x$ appears before $y$ in $w_\pi$, so $x'$ is closer to the origin of $C_i$ than $y'$ is to the origin of $C_j$. Since the sign of their common column is 1, this implies $x' <_1 y'$.

	Swapping $x$ and $y$ shows that $y <_1 x \Ra y' <_1 x'$, so $x <_1 y \iff x' <_1 y'$. The argument that $x <_2 y \iff x' <_2 y'$ is essentially identical.
\end{proof}

\begin{remark}
	{\em In MSO, one can state that a binary relation is acyclic via a sentence that works even in infinite structures. Using this in place of our definition of $\mathrm{Acyc}$, the hypothesis that $\pi$ is finite could be dropped, if we define $\Geom(M)$ by its basis when considering permutations larger than $\cont$.}
	\end{remark}

\subsection{Computing a basis}

We wish to reduce the problem of finding a basis for $\Geom(M)$ to a problem on finite words, so we begin by showing $\Geom(M)$ can be interpreted in a set of finite words. However, since the basis elements lie outside $\Geom(M)$, we must consider a larger class containing $\Geom(M)$ and its basis elements. As mentioned in Section \ref{sec:preMSO}, the class of all permutations does not have a decidable theory, and so is too large. But we will see that we may extend $\Geom(M)$ to a larger geometric grid class containing all its basis elements, thus staying within the realm of decidability.

Recall the maps $\phi_M$ and $\phi^\sharp_M$ from the end of Section \ref{sec:prePerm}.

\begin{lemma} \label{lem:interp}
	The map $\phi_M$ is an interpretation of $\Geom(M)$ in $\Sigma^*$, and $\phi^\sharp_M$ is an interpretation of $\Geom^\sharp(M)$ in $\Sigma^*$. 
\end{lemma}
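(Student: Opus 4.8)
The plan is to verify directly that $\phi_M$ (resp.\ $\phi_M^\sharp$) meets the restrictive definition of interpretation given in Section~\ref{sec:preMSO}: a surjective map for which each relation of the target language is uniformly definable in the source structure. Surjectivity is already recorded in Proposition~\ref{prop:phiFacts}, so the work is entirely in exhibiting, for each relation symbol of $\LL_P$ (resp.\ $\LL_{P,M}$), an $\LL_\Sigma$-formula that defines its $\phi_M$-preimage inside an arbitrary word $w \in \Sigma^*$.

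I would start with $\phi_M^\sharp \colon \Sigma^* \to \Geom^\sharp(M)$, since it is the more transparent case and the permutation case will follow by composing with the forgetful map. The domain of $\phi_M^\sharp(w)$ is (a copy of) the set of positions of $w$. The cell relations are immediate: position $k$ of $w$ lies in $C_i$ in $\phi_M^\sharp(w)$ precisely when the $k^{th}$ letter of $w$ is $C_i$, so $R_{C_i}(x) := U_{C_i}(x)$ works (here $U_{C_i}$ is the letter-predicate of $\LL_\Sigma$, which in our notation is just written $C_i$). For the two orders $<_1, <_2$: by the definition of $\phi_M^\sharp$ at the end of Section~\ref{sec:prePerm}, given two positions $x, y$ of $w$ lying in cells $C_i, C_j$, whether the image point of $x$ is $<_1$ (resp.\ $<_2$) the image point of $y$ is determined by a finite case split on the pair $(i,j)$: if $C_i, C_j$ are in different columns, the answer is fixed by which is to the left, regardless of $x,y$; if they are in the same column, the answer is governed by the column sign together with whether $x < y$ in $w$ (since the distance from the origin is strictly increasing in the word-index, and the origin's horizontal side is determined by the column sign). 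Symmetrically for $<_2$ using rows. So $R_{<_1}(x,y)$ is the finite disjunction, over all pairs $(i,j)$, of $U_{C_i}(x) \wedge U_{C_j}(y) \wedge (\text{fixed truth value, or } x<y, \text{ or } y<x \text{ as dictated by the geometry of } C_i, C_j)$; and likewise $R_{<_2}(x,y)$. These formulas depend only on $M$ (with its chosen signs), and one checks they genuinely define the stated relations by unwinding the definition of $\phi_M^\sharp$ — this is essentially the same verification already carried out in the $(\La)$ direction of Proposition~\ref{prop:msoDef}, where $D$ plays the role of ``same column/row, ordered by distance from origin''. For $\phi_M$, the target language is just $\LL_P = \set{<_1,<_2}$, and one reuses the very same $R_{<_1}, R_{<_2}$: the underlying permutation of $\phi_M^\sharp(w)$ is $\phi_M(w)$ by definition, so the orders are interpreted by the identical formulas (and we simply drop the cell relations).

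The main thing to be careful about — and the only real obstacle — is that the definition of interpretation requires $I(D)$ to be \emph{isomorphic} to the structure cut out by the formulas on the domain of $D$, and in our case the ``domain of $D$'' is literally the set of word-positions, whereas $\phi_M^\sharp(w)$ is a permutation whose points were described as lying at prescribed distances from cell origins. The content of Proposition~\ref{prop:phiFacts} (length-preserving, and the explicit construction) is exactly that the $k^{th}$ position of $w$ corresponds to the $k^{th}$ point placed, and one must confirm that under this bijection the $\LL_\Sigma$-formulas $R_{<_1}, R_{<_2}, R_{C_i}$ pull the relations of $\phi_M^\sharp(w)$ back correctly — i.e.\ the case analysis above is exhaustive and correct. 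Once this is checked, surjectivity from Proposition~\ref{prop:phiFacts} finishes both statements, and no further work (and in particular no appeal to finiteness beyond what Proposition~\ref{prop:phiFacts} already uses) is needed.
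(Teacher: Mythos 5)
Your proposal is correct and follows essentially the same route as the paper: surjectivity is quoted from Proposition~\ref{prop:phiFacts}, $R_{C_i}(x)$ is the letter predicate, and $R_{<_1}, R_{<_2}$ are given as finite disjunctions over pairs of cells, split according to whether the cells lie in different columns (resp.\ rows) or the same column (resp.\ row) with the word order and column/row sign deciding the latter case. The only difference is that you spell out the verification (unwinding the definition of $\phi^\sharp_M$) that the paper leaves implicit, which is fine.
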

\begin{proof}
	By Proposition \ref{prop:phiFacts}, $\phi_M$ and $\phi^\sharp_M$ are surjective, so it remains to define the relations $R_{<_1}$, $R_{<_2}$, and $R_{C_i}$. We define $R_{<_1}(x,y)$ as the disjunction of the following clauses, and $R_{<_2}(x,y)$ is defined similarly.
	\begin{enumerate}
		\item For all $i, j \in [n]$ such that $C_i$ is left of $C_j$: $x \in C_i \wedge y \in C_j$
		\item For all $i, j \in [n]$ such that $C_i$ and $C_j$ are in the same column, and the column sign is 1 (resp. $-1)$: $x \in C_i \wedge y \in C_j \wedge x < y$ (resp. $x \in C_i \wedge y \in C_j \wedge x > y$)
	\end{enumerate}
Finally, we define $R_{C_i}(x)$ as $x \in C_i$. 
	\end{proof}

\begin{corollary}
 It is computable from $M$ whether any given $\LL_P$-sentence $\theta$ is in $Th(\Geom(M))$.
\end{corollary}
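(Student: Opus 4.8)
The plan is to combine the MSO definition of $\Geom(M)$ from Proposition~\ref{prop:msoDef}, the interpretation from Lemma~\ref{lem:interp}, and the decidability of the theory of finite words from Corollary~\ref{cor:s1s}, via the reduction of Theorem~\ref{thm:interpRed}. Concretely, given an $\LL_P$-sentence $\theta$, I want to decide whether $\theta \in Th(\Geom(M))$.

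\begin{proof}
	Let $\theta$ be an $\LL_P$-sentence. By Proposition~\ref{prop:msoDef}, a finite permutation $\pi$ lies in $\Geom(M)$ if and only if $\pi \models \Geom_M$, where $\Geom_M$ is an $\LL_P$-sentence computable from $M$. Hence $\theta \in Th(\Geom(M))$ if and only if every finite permutation satisfying $\Geom_M$ also satisfies $\theta$. By Lemma~\ref{lem:interp}, $\phi_M \colon \Sigma^* \to \Geom(M)$ is an interpretation, and the defining $\LL_\Sigma$-formulas $R_{<_1}, R_{<_2}$ are computable from $M$. Applying Theorem~\ref{thm:interpRed} to the interpretation $\phi_M$ and the sentence $\theta$ (or rather to its relativized form; see below), we obtain an $\LL_\Sigma$-sentence $\theta'$, computable from $\theta$ and $M$, such that $\theta \in Th(\Geom(M)) \iff \theta' \in Th(\Sigma^*)$. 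Finally, by Corollary~\ref{cor:s1s} it is computable whether $\theta' \in Th(\Sigma^*)$, which completes the reduction.
\end{proof}

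The one subtlety to watch, and the step I expect to require the most care, is that Theorem~\ref{thm:interpRed} is stated for an interpretation $I \colon \DD \to \CC$ with $\CC$ exactly the image; here $\CC = \Geom(M)$ and $\DD = \Sigma^*$, which is fine since $\phi_M$ is onto $\Geom(M)$ by Proposition~\ref{prop:phiFacts}. So in fact no relativization is needed --- the class $\Geom(M)$ itself is the target of the interpretation, not a definable subclass of all permutations --- and Theorem~\ref{thm:interpRed} applies directly: one simply replaces each occurrence of $<_1$ and $<_2$ in $\theta$ by $R_{<_1}$ and $R_{<_2}$. The role of Proposition~\ref{prop:msoDef} in this particular corollary is thus minor; it becomes essential only in the next step, when one wants to capture basis elements by passing to a larger geometric grid class. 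For the present statement, the computability of the translation $\theta \mapsto \theta'$ is immediate from Theorem~\ref{thm:interpRed} together with the computability of the $R_{<_i}$ from $M$ noted in Lemma~\ref{lem:interp}, and the decidability of $Th(\Sigma^*)$ is Corollary~\ref{cor:s1s}.
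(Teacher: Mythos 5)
Your proposal is correct and follows essentially the same route as the paper: apply Theorem \ref{thm:interpRed} to the interpretation $\phi_M$ of Lemma \ref{lem:interp} (whose defining formulas $R_{<_1}, R_{<_2}$ are computable from $M$) and then decide the resulting sentence in $Th(\Sigma^*)$ via Corollary \ref{cor:s1s}. Your closing observation is also right: no relativization via $\Geom_M$ is needed since $\phi_M$ is onto $\Geom(M)$, and the paper's proof likewise makes no use of Proposition \ref{prop:msoDef} here.
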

\begin{proof}
	By Corollary \ref{cor:s1s} this is true for $Th(\Sigma^*)$. The result follows from the fact that the relations $R_{<_1}$ and $R_{<_2}$ in the interpretation in Lemma \ref{lem:interp} are computable from $M$ and from Theorem \ref{thm:interpRed}.
	\end{proof}

\begin{definition}
	Given a class $\CC$ of structures, we let $\CC^{+1}$ be the class of {\em one-point extensions} of structures in $\CC$, i.e. a structure $\pi \in \CC^{+1}$ if there is some $x \in \pi$ such that $(\pi \bs \set{x}) \in \CC$.
\end{definition}

\begin{theorem}[\cite{albert2013geometric}*{Theorem 6.4}] \label{thm:onepoint}
	Let $M$ be a matrix with dimensions $t \times u$. Then for any matrix $N$ containing all matrices with dimensions $(3t+1) \times (3u+1)$, we have $\Geom(M)^{+1} \subset \Geom(N)$.
	\end{theorem}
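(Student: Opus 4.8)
The plan is to reduce the statement to a purely geometric construction. Since $N$ contains every $(3t+1)\times(3u+1)$ matrix as a submatrix, and deleting rows and columns of a matrix only shrinks the associated geometric grid class (the standard figure of a submatrix sits inside that of the ambient matrix), it suffices to show that every $\pi^{+}\in\Geom(M)^{+1}$ lies in $\Geom(A)$ for \emph{some} $(3t+1)\times(3u+1)$ matrix $A$. So I would write $\pi^{+}=\pi\cup\{x\}$ with $\pi\in\Geom(M)$, fix a representation of $\pi$ by points on the standard figure of $M$, and note that $x$ may then be realized at any point of the open axis-parallel rectangle $\rho=I_1\times I_2$, where $I_1$ (resp. $I_2$) is the $<_1$-gap (resp. $<_2$-gap) of the drawn copy of $\pi$ into which $x$ falls; both $I_1$ and $I_2$ are nonempty and contain no point of $\pi$, because $\pi^{+}$ is a permutation and the drawing of $\pi$ respects both orders.

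Next I would enlarge the grid so that $x$ gets a cell of its own. First replace $M$ by its $3\times 3$ blow-up $M'$: subdivide each cell into a $3\times 3$ array and replace each diagonal by the three collinear segments through the corner sub-cells of its array. Then $M'$ has dimensions $3t\times 3u$ and (up to finitely many points) the same standard figure as $M$, so $\Geom(M')=\Geom(M)$ and $\pi$ is drawn on the standard figure of $M'$ as well. The crucial step is to choose a drawing of $\pi$ on the standard figure of $M'$ for which $I_1$ either contains a vertical gridline of $M'$ or extends past the left or right edge of $M'$, and likewise $I_2$ contains a horizontal gridline of $M'$ or extends past the top or bottom edge. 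Granting this, I would insert a thin new column along the chosen vertical gridline (or just outside the appropriate vertical edge) and a thin new row along the chosen horizontal gridline; these cross no segment of the standard figure of $M'$, so the result is the standard figure of a $(3t+1)\times(3u+1)$ matrix $A$ together with one new cell at the intersection of the new row and column. Drawing $x$ on a short diagonal through that new cell, which lies inside $\rho$, then yields a representation of $\pi^{+}$ on the standard figure of $A$, whence $\pi^{+}\in\Geom(A)\subseteq\Geom(N)$.

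The hard part will be the crucial step, i.e.\ producing the required drawing of $\pi$ on $M'$. If $x$ is $<_1$-extremal in $\pi^{+}$ then $I_1$ automatically extends past an edge, and similarly for $<_2$; and if the $<_1$-neighbours $p<_1 x<_1 q$ of $x$ already lie in distinct columns of $M$ then they lie in distinct columns of $M'$, so $I_1$ contains a gridline with nothing to do (and likewise for the $<_2$-neighbours $p',q'$ and rows). The real content is the remaining case, where $p,q$ lie in a common column $c$ of $M$ and/or $p',q'$ lie in a common row: here one re-distributes the points of $\pi$ lying in column $c$ among the three sub-columns $c_1,c_2,c_3$ of $c$ in $M'$ so that the cut between $p$ and $q$ falls between two sub-columns, and symmetrically for the row. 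The subtlety — and this is where I expect the bulk of the work — is that for a point lying in a given cell of $M$ its sub-column and sub-row in $M'$ are linked by that cell's diagonal, so when several of $p,q,p',q'$ lie in one cell of $M$ these re-drawings interact. One must check that three sub-columns (resp.\ sub-rows) always suffice to pull apart up to three points occurring in a prescribed $<_1$-order — the worst case being when all four neighbours lie in a single $(-1)$-cell of $M$ and appear in the order $q'<_1 p<_1 q$ — and that the re-drawing can be carried out while preserving $\pi$. Verifying these compatibilities, together with the routine check that $x$'s relations to \emph{all} of $\pi$ (not just its four neighbours) come out correctly, completes the argument.
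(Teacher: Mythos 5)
You should first note that this paper does not prove the statement at all: it is imported from \cite{albert2013geometric}*{Theorem 6.4}, so your argument has to stand on its own. Its outer architecture is fine and matches the spirit of the cited source: reducing to a single $(3t+1)\times(3u+1)$ matrix $A$ is legitimate (geometric grid classes are monotone under taking submatrices), and the endgame is genuinely routine --- once you have a drawing of $\pi$ on the $3\times 3$ refinement in which the $<_1$-gap of $x$ contains a vertical gridline (or lies beyond the boundary) and the $<_2$-gap contains a horizontal one, inserting a blank strip along each and a short diagonal in the new cell correctly realizes $\pi^{+}$, since separating $x$ from its four neighbours settles its relations to all of $\pi$.

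The problem is that the step you flag as ``crucial'' is not a compatibility check to be deferred; it is the entire content of the theorem, and your sketch does not establish it. The obstruction is exactly the coupling you name: a point in a $\pm 1$ cell can only move along its diagonal, so redistributing the points of column $c$ among sub-columns forces vertical displacements, which alter their $<_2$-relations to points in other columns of the same rows; restoring $\pi$ then forces those points to move along their own diagonals, i.e.\ horizontally within their columns, and these induced moves can propagate back into column $c$ or into the row you are simultaneously trying to split between $p'$ and $q'$. You give no argument (say, via an explicit re-gridding or via the word encoding $\phi_M$) that this propagation can always be resolved while both separations are achieved in one and the same drawing, nor do you justify that three sub-columns and sub-rows per original column and row suffice --- note also that $p$ and $q$ need not lie in the same cell of column $c$, so the sets to be pulled apart are in general spread over several cells, not just the single-cell ``worst case'' you isolate. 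As it stands, the proposal asserts rather than proves the central lemma (every $\pi\in\Geom(M)$ admits a drawing on the refinement in which both prescribed gaps meet gridlines), and carrying out that bookkeeping --- which is where the bound $(3t+1)\times(3u+1)$ is actually earned in \cite{albert2013geometric} --- is the missing proof.
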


\begin{theorem} \label{thm:compBasis}
	The basis of $\Geom(M)$ is computable from $M$.
\end{theorem}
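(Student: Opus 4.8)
The plan is to pin down the set of basis elements by a single monadic second-order sentence, and then to extract this (finite) set from a larger geometric grid class whose theory is decidable. Since $\Geom(M)$ is downward closed under embeddings, a permutation $\pi$ lies in the basis $\BB$ exactly when $\pi \notin \Geom(M)$ while $\pi \setminus \set{x} \in \Geom(M)$ for every $x \in \pi$. In particular every basis element has at least one point and is a one-point extension of a member of $\Geom(M)$, so $\BB \subseteq \Geom(M)^{+1} \subseteq \Geom(N)$ for a matrix $N$ computable from $M$ (Theorem \ref{thm:onepoint}). After replacing $N$ by a partial multiplication matrix with the same geometric grid class (\cite{albert2013geometric}*{Proposition 4.2}) we may take $N$ in our standard form, and then the corollary following Lemma \ref{lem:interp}, applied to $N$, gives that $Th(\Geom(N))$ is decidable, uniformly in $M$.

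Next I would write an $\LL_P$-sentence $\beta_M$, computable from $M$, with $\pi \models \beta_M \iff \pi \in \BB$. For a fresh element variable $x$, let $\Geom_M^{\neq x}$ be the relativization of the sentence $\Geom_M$ of Proposition \ref{prop:msoDef} to the complement of $x$: replace each $\exists z\,\phi$ by $\exists z\,(z \neq x \wedge \phi)$, each $\forall z\,\phi$ by $\forall z\,(z \neq x \to \phi)$, each $\exists Z\,\phi$ by $\exists Z\,((\forall w\,(w \in Z \to w \neq x)) \wedge \phi)$, and similarly for $\forall Z$. A routine induction on formulas shows that for any permutation $\pi$ and $x \in \pi$ we have $\pi \models \Geom_M^{\neq x}$ iff the induced subpermutation on $\pi \setminus \set{x}$ satisfies $\Geom_M$, i.e.\ lies in $\Geom(M)$ by Proposition \ref{prop:msoDef}. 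Hence, by the characterization of $\BB$ above,
\[ \beta_M \ :=\ \neg\,\Geom_M \ \wedge\ \forall x\; \Geom_M^{\neq x} \]
defines $\BB$, and in particular $\set{\pi \in \Geom(N) : \pi \models \beta_M} = \BB$.

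Finally I would bound the sizes of basis elements by a search. For $k = 1, 2, 3, \dots$, let $\exists^{\geq k}$ be the first-order sentence asserting that there are $k$ distinct elements, and use decidability of $Th(\Geom(N))$ to test whether $\neg(\exists^{\geq k} \wedge \beta_M) \in Th(\Geom(N))$, i.e.\ whether $\Geom(N)$ has no basis element of size at least $k$. Because $\BB$ is finite (\cite{albert2013geometric}*{Theorem 6.2}), this holds once $k$ exceeds the largest size of a basis element, so the search halts at some first value $K$; every basis element then has fewer than $K$ points. It remains to enumerate the finitely many permutations of size less than $K$ and, for each $\pi$, decide $\pi \in \BB$ by brute force — checking that $\pi \models \Geom_M$ fails while $\pi \setminus \set{x} \models \Geom_M$ for all $x$, each a finite search over the witnessing sets $Y_1, \dots, Y_n$ — and output those that pass. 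The one genuine subtlety is that we never obtain an a priori bound on the size of basis elements; instead we combine the non-effective finite basis theorem with the decidability of $Th(\Geom(N))$ to turn finiteness into an effective stopping rule. Everything else — the relativization giving $\beta_M$, the passage to $N$, and the final model-checking — is routine given Proposition \ref{prop:msoDef} and Lemma \ref{lem:interp}.
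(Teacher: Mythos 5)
Your proof is correct and follows essentially the paper's strategy: pass to a computable partial multiplication matrix $N$ with $\Geom(M)^{+1} \subseteq \Geom(N)$ (Theorem \ref{thm:onepoint}), use the decidability of $Th(\Geom(N))$ coming from Lemma \ref{lem:interp} as the effective test, and invoke the non-effective finite basis theorem only to guarantee termination of the search. The sole difference is the stopping rule: the paper checks whether a growing candidate set $\BB$ of basis elements already covers $\Geom(N) \setminus \Geom(M)$ via the sentence $\mathrm{Basis}_{\BB,M}$, whereas you define the basis directly by a relativized sentence (exactly as the paper does later in Proposition \ref{prop:bound}) and search for a size bound before enumerating -- an equally valid variant of the same argument.
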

\begin{proof}
	Given $M$, by Theorem \ref{thm:onepoint} we can compute a matrix $N$ such that $\Geom(M)^{+1} \subset \Geom(N)$. Thus $\Geom(N)$ contains the basis of $\Geom(M)$. Given $\BB \subset \Geom(N)$ finite, we now define a sentence $\mathrm{Basis}_{\BB, M}$ that is in $Th(\Geom(N))$ if and only if $\BB$ contains the basis of $\Geom(M)$. Let $\mathrm{Basis}_{\BB, M} :=$ 
	 \[\neg \Geom_M \ra \bigvee_{B \in \BB}(\text{there exist elements inducing a copy of $B$}) \]
	
	We can now compute the basis of $\Geom(M)$. Note that we can easily check whether a given permutation is a basis element for $\Geom(M)$. So start with $\BB = \emptyset$ and iterate through all the permutations in increasing length, adding each new basis element to $\BB$. Each time we add a new element to $\BB$, we check whether $\mathrm{Basis}_{\BB, M} \in Th(\Geom(N))$. When this is true, $\BB$ is the basis of $\Geom(M)$. 
\end{proof}

\subsection{Bounding the size of basis elements}

The algorithm in Theorem \ref{thm:compBasis} gives no upper bound on the size of the basis elements of $\Geom(M)$. Here we give such a bound, although it is almost certainly very far from optimal. This gives an alternative algorithm for computing the basis that simply checks all permutations up to the maximum size, and also bounds the run-time of the previous algorithm.

\begin{proposition} \label{prop:bound}
	There is a computable function giving an upper bound on the size of the largest basis element of $\Geom(M)$ in terms of $|M|$.
\end{proposition}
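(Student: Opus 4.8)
The plan is to transfer the problem, exactly as in the proof of Theorem~\ref{thm:compBasis}, to a regular language of words over the cells of a larger matrix, and then to bound the length of basis elements by the number of states of an automaton recognizing that language, by a pumping argument. First, from $M$ compute (by Theorem~\ref{thm:onepoint}, and, if necessary, replacing the result by a partial multiplication matrix with a chosen sign assignment) a matrix $N$ with $\Geom(M)^{+1} \subset \Geom(N)$, so that every basis element of $\Geom(M)$ lies in $\Geom(N)$; let $\Sigma_N$ be its set of non-zero cells. Consider the language $L_M := \{ w \in \Sigma_N^* : \phi_N(w) \in \Geom(M) \}$. Combining Proposition~\ref{prop:msoDef} (the sentence $\Geom_M$ is an $\LL_P$-sentence, computable from $M$, defining $\Geom(M)$), Lemma~\ref{lem:interp} ($\phi_N$ is an interpretation of $\Geom(N)$ in $\Sigma_N^*$, with computable defining formulas), and Theorem~\ref{thm:interpRed} together with the explicit construction of $\theta'$ described thereafter, one computes from $M$ an $\LL_{\Sigma_N}$-sentence $\theta'$ with $w \models \theta' \iff \phi_N(w) \models \Geom_M \iff \phi_N(w) \in \Geom(M)$; thus $L_M = \{w : w \models \theta'\}$ is regular, and by Theorem~\ref{thm:MSOreg} we can compute a finite automaton recognizing it, which we may take to be deterministic. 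Let $q$ be its number of states. (The automaton may be enormous, but this is irrelevant since we only need a computable bound; and since there are only finitely many $0/$$\pm1$-matrices of any given size, a bound computable from $M$ yields one computable from $|M|$.) It therefore suffices to show that every basis element of $\Geom(M)$ has size less than $q$.

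Observe first that $L_M$ is closed under the subword order: if $v$ is a subword of some $w \in L_M$, then $\phi_N(v)$ embeds into $\phi_N(w) \in \Geom(M)$ by Proposition~\ref{prop:phiFacts}, and $\Geom(M)$ is closed under embeddings, so $v \in L_M$; hence $\Sigma_N^* \setminus L_M$ is upward closed (with, by Higman's lemma, finitely many minimal elements). Now let $B$ be a basis element of $\Geom(M)$. Since $B \in \Geom(N)$, fix an $N$-gridding $B^\sharp$ of $B$, and, using that $\phi_N^\sharp$ is onto, fix $w \in \Sigma_N^*$ with $\phi_N^\sharp(w) = B^\sharp$; then $|w| = |B|$ and $\phi_N(w) = B$. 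As $B \notin \Geom(M)$ we get $w \notin L_M$, whereas for any proper subword $v$ of $w$ the permutation $\phi_N(v)$ embeds into $B$ and has strictly fewer points than $B$, hence embeds into $B \setminus \{x\}$ for some $x \in B$, which lies in $\Geom(M)$ because $B$ is a basis element; so $\phi_N(v) \in \Geom(M)$, i.e.\ $v \in L_M$. Therefore $w$ is a minimal element of $\Sigma_N^* \setminus L_M$.

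It remains to bound the length of minimal words outside $L_M$ by $q$, which is the standard pumping argument: if $w = a_1 \cdots a_\ell$ with $\ell \geq q$, then among the $\ell + 1 > q$ states the automaton reaches along the prefixes of $w$ two must coincide, and deleting the (nonempty) block of letters between the two occurrences yields a proper subword $w'$ of $w$ reaching the same final state, so $w' \notin L_M$ — contradicting the minimality of $w$. Hence $|B| = |w| < q$, as required.

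I do not expect any single step to be a serious obstacle. The one point that needs care — and the reason the statement is not completely trivial — is that a basis element $B$ lies outside $\Geom(M)$ and so cannot be gridded in $M$ itself, so one must pass to the larger matrix $N$; after this, $B$ corresponds to a minimal word in the subword-closed regular language $L_M = \phi_N^{-1}(\Geom(M))$, and the elementary pumping bound on such words finishes the argument. (The same reasoning also bounds the number of basis elements, by the number of words of length $< q$ over $\Sigma_N$, though we will not need this.)
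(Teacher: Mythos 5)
Your proof is correct, and it reaches the bound by a genuinely different (though closely related) route than the paper. The paper writes an explicit MSO sentence $\mathrm{Basis}_P$ defining the basis itself (namely $\neg\Geom_M$ together with ``every one-point deletion satisfies $\Geom_M$''), translates it through the interpretation of Lemma~\ref{lem:interp} into a sentence defining the word language $\phi_N^{-1}(\BB_M)$, computes the corresponding automaton by Theorem~\ref{thm:MSOreg}, and then invokes the pumping lemma for \emph{finite} languages (finiteness resting on the finite-basis theorem together with $\phi_N$ being finite-to-one) to bound accepted word lengths by the number of states. You instead pull back only the already-constructed sentence $\Geom_M$, obtaining the subword-closed regular language $L_M=\phi_N^{-1}(\Geom(M))\subseteq\Sigma_N^*$, show that each basis element yields a rejected word all of whose proper subwords are accepted, and bound such minimal rejected words by the number of states of a determinized automaton via the standard pigeonhole/pumping argument. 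Both routes give a computable (tower-type) bound; yours costs an extra determinization step but has two small advantages: you do not need to encode ``basis element'' in MSO (one fewer block of quantifiers feeding the automaton construction, which is relevant to the tower-height discussion following the proposition), and you do not need to presuppose finiteness of the basis --- your argument reproves it, with an explicit bound, so the aside invoking Higman's lemma is unnecessary. The two points needing care --- passing to $N$ via Theorem~\ref{thm:onepoint} because basis elements cannot be gridded in $M$, and using the structure-level form of the interpretation (each $w$ satisfies the translated sentence iff $\phi_N(w)\models\Geom_M$, which is exactly what the substitution described after Theorem~\ref{thm:interpRed} provides and what the paper itself uses elsewhere) --- are both handled correctly.
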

\begin{proof}
	Given $M$, we can write an $\LL_P$-sentence $\mathrm{Basis}_P$ defining the basis $\BB_M$ of $\Geom(M)$, saying $\neg \Geom_M$, and for all singletons $x$ there is subset $X$ such that every $y \neq x$ is in $X$ and the restriction to $X$ satisfies $\Geom(M)$. The length of this sentence is bounded in terms of $|M|$. By Theorem \ref{thm:onepoint}, we may compute a larger matrix $N$ so $\Geom(N)$ contains $\Geom(M)^{+1}$. Using Theorem \ref{thm:interpRed} and Lemma \ref{lem:interp}, we translate the $\LL_P$-sentence $\mathrm{Basis}_P$ to a sentence $\mathrm{Basis}_{\Sigma_N}$ in the language of $\Sigma_N$-words (where $\Sigma_N$ is the set of non-zero cells of $N$) defining $\phi_N^{-1}(\BB_M)$ and with length bounded in terms of $|N|$. By Theorem \ref{thm:MSOreg}, we compute a finite automaton that accepts $\phi_N^{-1}(\BB_M)$, with size bounded by a computable function of the length of $\mathrm{Basis}_{\Sigma_N}$. But this automaton accepts a finite language, so by the pumping lemma, the length of a word it accepts cannot be larger than the number of states of the automaton. So its size bounds the length of the elements in $\phi_N^{-1}(\BB_M)$, and thus in $\BB_M$ since $\phi_N$ is length-preserving.
	\end{proof}

As noted in Section \ref{sec:why}, in general the size of the automaton produced by Theorem \ref{thm:MSOreg} need not be elementary in the length of the corresponding sentence, so a priori neither is the function in Proposition \ref{prop:bound}. However, by tracing through the construction, one sees (e.g. \cite[Remark 12.22]{weyer2002decidability}) that the size of the automaton corresponding to a sentence $\theta$ in prenex normal form with $qr_\theta$-many quantifier alternations is bounded above by the iterated exponential function $\mathrm{exp}_{qr_\theta+2}(|\theta|)$, where $\mathrm{exp}_0(n) = n$ and $\mathrm{exp}_{k+1}(n) = 2^{\mathrm{exp}_k(n)}$. Since the quantifier-rank of $\Geom_M$ in prenex normal form is bounded and fairly small (and so the same is true of the sentence $\mathrm{Basis}_P$ in Proposition \ref{prop:bound}), and the interpretation in Lemma \ref{lem:interp} uses quantifier-free formulas with length polynomial in $|N|$ and so exponential in $|M|$, we obtain an iterated exponential function of similarly small height as an upper bound for the function in Proposition \ref{prop:bound}.

\section{Computing the generating function of Geom(M)}

Here we construct an automaton for a regular language in size-preserving bijection with $\Geom(M)$, from which we can compute the generating function for $\Geom(M)$. The map $\phi_M \colon \Sigma^* \to \Geom(M)$ is already surjective but has two sources of non-injectivity. One is that a given permutation in $\Geom(M)$ can have several different griddings, extending to distinct elements of $\Geom^\sharp(M)$. As in \cite{albert2013geometric}*{\S 8}, we will handle this by defining a linear order on the set of griddings of a given permutation, and only keep the word corresponding to the minimal gridding; the key difference is that we will exhibit an explicit sentence defining these minimal gridded permutations. The other source of non-injectivity is that, given two points in an $M$-gridded permutation in distinct rows and columns, exchanging their relative distance to the origins of their respective cells might not change the $M$-gridded permutation, but will correspond to different words; see \cite{albert2013geometric}*{Figure 7} for an example. This is already handled constructively through the theory of trace monoids in \cite{albert2013geometric}*{\S 7}.

\begin{definition}
	Let $\clex$ be the linear order on $\Sigma$ defined by $C_i \clex  C_j$ if $C_i$ is below $C_j$, or they are in the same row and $C_i$ is left of $C_j$.
	
	We now define, for each $\pi \in \Geom(M)$, a linear order $\lex$ on the set of elements of $\Geom^\sharp(M)$ isomorphic to $\pi$ as permutations. Let $\sigma^\sharp \lex \tau^\sharp$ if, letting $p$ be the $<_1$-least point such that $\sigma^\sharp$ and $\tau^\sharp$ place $p$ in different cells, we have $p \in C_i$ in $\sigma^\sharp$ and $p \in C_j$ in $\tau^\sharp$ with $C_i \clex C_j$.
\end{definition}

\begin{lemma} \label{lem:MinM}
	There is an $\LL_{P, M}$-sentence $\mathrm{Min}_M$, computable from $M$, such that for every $\pi^\sharp \in \Geom^\sharp(M)$, $\pi^\sharp \models \mathrm{Min}_M \iff \pi^\sharp$ is $\lex$-minimal.
\end{lemma}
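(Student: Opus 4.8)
The plan is to write $\mathrm{Min}_M$ as a sentence asserting that no "cheaper" regridding of the underlying permutation is possible. An $M$-gridded permutation $\pi^\sharp$ is $\lex$-minimal among griddings of its underlying permutation $\pi$ exactly when there is no valid $M$-gridding $\tau^\sharp$ of $\pi$ that is $\lex$-smaller; and by the definition of $\lex$, being $\lex$-smaller means: there is a $<_1$-least point $p$ moved to a different cell, and $p$ moves to a $\clex$-smaller cell. So $\mathrm{Min}_M$ should say: for every choice of subsets $Z_1, \dots, Z_n$ that form a valid $M$-gridding of the underlying permutation (i.e. satisfy $\Mon'_M(Z_1, \dots, Z_n) \wedge \mathrm{Acyc}_{D(Z_1,\dots,Z_n;x,y)}$, which is exactly the body of $\Geom_M$), \emph{if} the $Z_i$ differ from the given $C_i$, \emph{then} at the $<_1$-least point of disagreement, the new cell is $\clex$-larger (not smaller) than the old one. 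Quantifying over the $Z_i$ is a monadic second-order quantifier, so this stays within MSO over $\LL_{P,M}$.

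**Key steps.** First I would fix notation: let $\mathrm{Grid}_M(Z_1,\dots,Z_n)$ abbreviate $\Mon'_M(Z_1,\dots,Z_n) \wedge \mathrm{Acyc}_{D(Z_1,\dots,Z_n;x,y)}$, which is computable from $M$ and, by (the proof of) Proposition~\ref{prop:msoDef}, holds of subsets $Z_i$ of a permutation $\pi$ precisely when placing each point of $Z_i$ into cell $C_i$ yields an element of $\Geom^\sharp(M)$ whose underlying permutation is $\pi$. Second, I would write an $\LL_{P,M}$-formula $\mathrm{Diff}(Z_1,\dots,Z_n; x)$ saying "$x$ is in a different cell under $(C_i)$ than under $(Z_i)$", namely $\bigvee_{i\in[n]}\bigl(x\in C_i \wedge \neg(x\in Z_i)\bigr)$, and a formula $\mathrm{Least}\text{-}\mathrm{Diff}(Z_1,\dots,Z_n; x)$ saying $\mathrm{Diff}(\dots;x) \wedge \forall y(y<_1 x \ra \neg\mathrm{Diff}(\dots;y))$. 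Third, I would write $\mathrm{Smaller}\text{-}\mathrm{Move}(Z_1,\dots,Z_n)$ saying: $\exists x\, \mathrm{Least}\text{-}\mathrm{Diff}(\dots;x) \wedge \bigvee_{C_j \clex C_i}(x\in C_i \wedge x\in Z_j)$, where the finite disjunction ranges over all ordered pairs $(i,j)$ with $C_j \clex C_i$ (using the explicit description of $\clex$ from the definition — "$C_j$ below $C_i$, or same column and $C_j$ left of $C_i$" — which is decidable from $M$). Finally, set
\[
\mathrm{Min}_M := \forall Z_1,\dots,Z_n\bigl(\mathrm{Grid}_M(Z_1,\dots,Z_n) \ra \neg\,\mathrm{Smaller}\text{-}\mathrm{Move}(Z_1,\dots,Z_n)\bigr).
\]

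**Correctness and the main obstacle.** For the correctness argument: given $\pi^\sharp \in \Geom^\sharp(M)$ with cells $(C_i)$, any assignment of subsets $Z_i$ satisfying $\mathrm{Grid}_M$ corresponds — via the equivalence established in the proof of Proposition~\ref{prop:msoDef} — to some $\tau^\sharp \in \Geom^\sharp(M)$ with the same underlying permutation $\pi$, and conversely every such $\tau^\sharp$ arises this way; moreover $\tau^\sharp \lex \pi^\sharp$ iff $\mathrm{Smaller}\text{-}\mathrm{Move}(Z_1,\dots,Z_n)$ holds, directly by unwinding the definition of $\lex$. Hence $\pi^\sharp \models \mathrm{Min}_M$ iff no such $\tau^\sharp$ is $\lex$-strictly-below $\pi^\sharp$, i.e. iff $\pi^\sharp$ is $\lex$-minimal. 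The one point requiring care — and the main obstacle — is the precise correspondence between "subsets $Z_i$ satisfying $\mathrm{Grid}_M$ inside $\pi^\sharp$" and "elements $\tau^\sharp$ of $\Geom^\sharp(M)$ with underlying permutation isomorphic to $\pi$". The forward direction of Proposition~\ref{prop:msoDef} shows any gridding gives such subsets; the reverse direction shows such subsets give \emph{a} gridded permutation isomorphic to $\pi$, but one must check this isomorphism is the identity on the point set (so that "the $<_1$-least point placed differently" is literally the same point $p$ in both $\pi^\sharp$ and $\tau^\sharp$) — which it is, since the construction in that proof keeps the domain fixed and only re-sorts via $\prec$, and the underlying-permutation isomorphism it produces is the identity map on elements. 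Once this bookkeeping is pinned down, the sentence above is visibly computable from $M$ and the equivalence follows.
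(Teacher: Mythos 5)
Your proposal is correct and follows essentially the same route as the paper: both quantify universally over alternative griddings via set variables satisfying $\Mon'_M \wedge \mathrm{Acyc}_D$ (the body of $\Geom_M$), use the correspondence from the proof of Proposition~\ref{prop:msoDef} to identify such tuples with regriddings of the same point set, and then assert in first-order terms that no regridding is strictly $\lex$-smaller. Your formulation via the $<_1$-least point of disagreement is just a trivially equivalent rewriting of the paper's pointwise condition, so there is nothing substantive to distinguish the two arguments.
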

\begin{proof}
	Let $\mathrm{Min}_M$ be the following sentence.
\begin{align*}
	\forall Y_1,& \dots, Y_n (\Mon_M(Y_1, \dots, Y_n) \wedge \mathrm{Acyc}_{D(Y_1, \dots, Y_n; x, y)} \ra \forall x (x \in Y_j \ra \\
	& ((\bigvee_{C_i \clexeq C_j} x \in C_i) \vee 
	\exists y (y <_1 x  \wedge \bigvee_{\ell \in [n]} (y \in Y_\ell \wedge \bigvee_{C_k \clex C_\ell} y \in C_k)))))															
\end{align*}
For its correctness, we remark that the sentence can be viewed quantifying over all possible griddings of its domain with each $Y_i$ corresponding to the cell $C_i$. That is, the proof of Proposition \ref{prop:msoDef} shows that any $Y_1, \dots, Y_n$ satisfying the $\Mon_M \wedge \mathrm{Acyc}$ clause of $\mathrm{Min}_M$ yields a gridding such that $x \in Y_i$ if and only if $x \in C_i$ for each $i \in [n]$, and conversely every gridding yields a choice of $Y_1, \dots, Y_n$ satisfying the first line such that $x \in Y_i$ if and only if $x \in C_i$.

Given this, $\pi^{\sharp} \models \mathrm{Min}_M$ states that for every alternative gridding of $\pi^{\sharp}$, every point either moves to a cell that is at least as large with respect to $\preceq_{lex}$ as its cell in $\pi^{\sharp}$, or has another point to its left that has moved to a cell that is strictly larger with respect to $\preceq_{lex}$ than that point's cell in $\pi^{\sharp}$.
	\end{proof}

\begin{lemma}
	There is an $\LL_\Sigma$-sentence $\mathrm{Bij}_M$, computable from $M$, defining a set of words in size-preserving bijection with $\Geom(M)$.
\end{lemma}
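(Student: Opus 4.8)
The plan is to carve out of $\Sigma^*$, by a single $\LL_\Sigma$-sentence, a set of words on which $\phi_M$ descends to a length-preserving bijection onto $\Geom(M)$. There are two sources of non-injectivity of $\phi_M$, and I would kill each with one conjunct: $\mathrm{Bij}_M := \mathrm{LexNF}_M \wedge \mathrm{Min}_M'$, where $\mathrm{LexNF}_M$ selects a single word over each $M$-gridded permutation, and $\mathrm{Min}_M'$ keeps, among all griddings of a permutation $\pi \in \Geom(M)$, only the $\lex$-minimal one.

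For the first conjunct, recall (from the theory of trace monoids, as used in \cite{albert2013geometric}*{\S 7}) that $\phi_M^\sharp(w) = \phi_M^\sharp(w')$ exactly when $w$ and $w'$ are trace-equivalent for the independence relation on $\Sigma$ that declares $C_i$ and $C_j$ independent iff they lie in different rows and different columns: indeed $\phi_M^\sharp(w)$ is determined by the cell of each point together with the word-order restricted to pairs of points in a common row or common column, the order of any other pair being forced by the positions of their cells. Fix the alphabet order $C_1 < \dots < C_n$, and let $\mathrm{LexNF}_M$ be the (first-order, hence $\LL_\Sigma$-) sentence saying that the word contains no ``bad pair'', i.e. no positions $i < j$ with the letter at position $j$ smaller than the letter at position $i$ in the fixed alphabet order and independent of the letter at position $k$ for every $k$ with $i \le k < j$; this is plainly computable from $M$. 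By the classical description of lexicographically minimal trace representatives, $\set{w : w \models \mathrm{LexNF}_M}$ contains exactly one word from each trace-equivalence class, so $\phi_M^\sharp$ restricts to a bijection of it onto $\Geom^\sharp(M)$. (Alternatively, one may quote that \cite{albert2013geometric}*{\S 7} already constructs such a transversal as a regular language and then apply Theorem \ref{thm:MSOreg}.)

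For the second conjunct, I would use the interpretation $\phi_M^\sharp$ of $\Geom^\sharp(M)$ in $\Sigma^*$ from Lemma \ref{lem:interp}: substituting its defining $\LL_\Sigma$-formulas $R_{<_1}, R_{<_2}, R_{C_i}$ into the sentence $\mathrm{Min}_M$ of Lemma \ref{lem:MinM} yields an $\LL_\Sigma$-sentence $\mathrm{Min}_M'$, computable from $M$, with $w \models \mathrm{Min}_M' \iff \phi_M^\sharp(w) \models \mathrm{Min}_M \iff \phi_M^\sharp(w)$ is $\lex$-minimal among the griddings of its underlying permutation. (This is the formula-substitution underlying Theorem \ref{thm:interpRed}, applied now at the level of individual words rather than of theories.) Setting $\mathrm{Bij}_M := \mathrm{LexNF}_M \wedge \mathrm{Min}_M'$, it remains to check that $\phi_M$ restricts to a size-preserving bijection $\set{w : w \models \mathrm{Bij}_M} \to \Geom(M)$: surjectivity because every $\pi \in \Geom(M)$ has a unique $\lex$-minimal gridding and that gridding has a unique preimage in the $\mathrm{LexNF}_M$-transversal; injectivity because two models of $\mathrm{Bij}_M$ with the same underlying permutation have equal (both $\lex$-minimal) $\phi_M^\sharp$-images and, being both in the transversal, are equal; and size-preservation because $\phi_M$ is length-preserving (Proposition \ref{prop:phiFacts}) and the size of a permutation is the length of any word over it.

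The main obstacle is the first step: correctly describing the fibers of $\phi_M^\sharp$ as trace-equivalence classes and exhibiting an MSO-definable transversal that is computable from $M$. Everything else is bookkeeping or a direct appeal to Lemmas \ref{lem:interp} and \ref{lem:MinM}; the delicate point is that the trace-monoid input be made explicit enough (via $\mathrm{LexNF}_M$, or via the constructive content of \cite{albert2013geometric}*{\S 7}) to keep $\mathrm{Bij}_M$ computable from $M$.
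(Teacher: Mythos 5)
Your proposal is correct and follows essentially the same route as the paper: the paper also takes $\mathrm{Bij}_M$ to be the conjunction of an MSO sentence defining a lexicographic-normal-form transversal of the trace classes (quoting \cite{albert2013geometric}*{Corollary 7.3} and making it explicit via \cite{diekert1990combinatorics}*{Proposition 1.2.2}, which is the same normal-form condition you write as $\mathrm{LexNF}_M$) with the sentence $\mathrm{Min}'_M$ obtained by pushing $\mathrm{Min}_M$ through the interpretation of Lemma \ref{lem:interp}. The only cosmetic difference is your use of an arbitrary fixed alphabet order instead of $\clex$ for the normal form, which does not affect the argument.
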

\begin{proof}
	By \cite[Corollary 7.3]{albert2013geometric}, there is a regular language $L^\sharp$ in size-preserving bijection with $\Geom^\sharp(M)$. We will show that we can compute an  $\LL_\Sigma$-sentence $\mathrm{Bij}_{M^\sharp}$ defining $L^\sharp$. By Theorem \ref{thm:interpRed} and Lemmas \ref{lem:interp} and \ref{lem:MinM}, there is an $\LL_\Sigma$-sentence $\mathrm{Min}'_M$ such that $w \models \mathrm{Min}'_M \iff \phi^\sharp(w) \models \mathrm{Min}_M$. Then we can take $\mathrm{Bij}_M = \mathrm{Bij}_{M^\sharp} \wedge \mathrm{Min}'_M$.
	
	The proof of \cite[Corollary 7.3]{albert2013geometric} relies on \cite[Proposition 1.2.2]{diekert1990combinatorics}, which yields the following description of $L^\sharp$. First, define a binary relation $I \subset \Sigma \times \Sigma$ that holds when the corresponding cells of the inputs are in different rows and columns. Then $w \in \Sigma^*$ is in $L^\sharp$ if for every factor $aub$ of $w$ with $a, b \in \Sigma, u \in \Sigma^*$ such that all $x \in au$ satisfy $I(x,b)$, we have $a \clex b$. This is easily definable by a sentence computable from $M$.
	\end{proof}

\begin{theorem} \label{thm:compgf}
	The generating function of $\Geom(M)$ is computable from $M$.
\end{theorem}
\begin{proof}
	Given $M$, we can compute $\mathrm{Bij}_M$, and thence a finite automaton accepting a language in size-preserving bijection with $\Geom(M)$ by Theorem \ref{thm:MSOreg}. The generating function for  the language accepted by this automaton can then be computed by the transfer-matrix method (e.g. \cite[Corollary 1.4.2]{ardila2015algebraic}).
	\end{proof}

\section{Some extensions}  \label{sec:ext}

We note that our results easily extend to certain subclasses of geometric grid classes, as well as to their substitution closures. We begin with a definition capturing a notion of indecomposability for permutations.

\begin{definition}
	 A permutation $\pi$ is {\em simple} if there does not exist $X \subsetneq \pi$ of size at least two such that for all $x, y \in X$ and $z \in \pi \bs X$, we have that $z <_i x \iff z <_i y$ for $i \in \set{1,2}$.
\end{definition}

Note that this definition can be easily translated into an MSO sentence $\mathrm{Simple}$. The idea of the notion is that if a permutation is not simple, then it can be obtained from a simpler quotient permutation by ``inflating'' the points into permutations. An inflation of $\pi$ can be viewed as blowing up the points of $\pi$ into boxes with non-overlapping axis-projections, and placing (possibly distinct) permutations in each box (see \cite[Figure 19]{vatter2015permutation} for an inflation of (2413)).

Our proofs in the previous sections show the following stronger result.

\begin{theorem} \label{thm:strongComp}
	Let $\theta_M$ be an $\LL_P$-sentence computable from $M$, and let $\CC = \set{\pi \in \Geom(M) | \pi \models \theta_M}$. Then the generating function of $\CC$ is rational and computable from $M$. If $\CC$ is downwards-closed under embeddings, then the basis of $\CC$ is computable from $M$.
	
	In particular, we may take $\CC$ to be the set of simple or (skew) sum-indecomposable elements of $\Geom(M)$, or we may take $\CC = \Geom(M) \cap \Av(\BB)$ for a given finite set $\BB$ of permutations.
\end{theorem}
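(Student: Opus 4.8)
The plan is to trace through the proofs of Theorems~\ref{thm:compBasis} and the generating-function result, observing that at no point did we use any special feature of $\Geom(M)$ beyond (a) the interpretation of $\Geom(M)$ (and $\Geom^\sharp(M)$) in a finite set of words via $\phi_M$, and (b) the one-point extension result embedding $\Geom(M)^{+1}$ into a larger, computable geometric grid class $\Geom(N)$. Both ingredients are compatible with restricting to a definable subclass. First, for the generating function: given $\theta_M$, the class $\CC = \set{\pi \in \Geom(M) \mid \pi \models \theta_M}$ is interpreted in $\Sigma^*$ by restricting $\phi_M$ to the preimage $\phi_M^{-1}(\CC)$, which is defined by the $\LL_\Sigma$-sentence obtained from $\theta_M$ via Theorem~\ref{thm:interpRed} and Lemma~\ref{lem:interp}. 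Conjoining this with $\mathrm{Bij}_M$ yields an $\LL_\Sigma$-sentence defining a regular language in size-preserving bijection with $\CC$ (the bijection from $\mathrm{Bij}_M$ restricts to a bijection onto $\CC$ because it already was a bijection onto $\Geom(M)$ and we are merely cutting down to the words whose image lies in $\CC$). Then apply Theorem~\ref{thm:MSOreg} to get an automaton and the transfer-matrix method to extract the generating function, exactly as before.

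For the basis, suppose additionally that $\CC$ is downwards-closed, so it has a basis $\BB_\CC$. The key point is that every element of $\BB_\CC$ is a one-point extension of a member of $\CC \subseteq \Geom(M)$: indeed, removing any point of a basis element $B$ gives a proper sub-permutation, which lies in $\CC$ by downward-closure and minimality of $B$. Hence $\BB_\CC \subseteq \Geom(M)^{+1} \subseteq \Geom(N)$ for the matrix $N$ from Theorem~\ref{thm:onepoint}. Now mimic the proof of Theorem~\ref{thm:compBasis}: membership in $\CC$ is MSO-definable (by $\Geom_M \wedge \theta_M$, using Proposition~\ref{prop:msoDef}), so ``$B$ is a basis element of $\CC$'' is a checkable condition on a fixed permutation $B$ (it fails $\Geom_M \wedge \theta_M$ but every proper sub-permutation satisfies it); thus we can enumerate candidate basis elements in order of length. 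For a finite $\BB \subseteq \Geom(N)$, the sentence
\[ \mathrm{Basis}_{\BB, \CC} := (\neg(\Geom_M \wedge \theta_M)) \ra \bigvee_{B \in \BB}(\text{there exist elements forming an isomorphic copy of } B) \]
lies in $Th(\Geom(N))$ exactly when $\BB$ contains $\BB_\CC$; by the corollary to Theorem~\ref{thm:interpRed} and Lemma~\ref{lem:interp} this membership is decidable, so we halt when it first holds. The examples then follow: $\mathrm{Simple}$ and the sum-indecomposability sentences (e.g.\ Sum-Ind and its skew variant) are explicit $\LL_P$-sentences, and for $\CC = \Geom(M) \cap \Av(\BB)$ one takes $\theta_M := \bigwedge_{B \in \BB}\neg(\text{there exist elements forming a copy of } B)$, which is clearly downwards-closed.

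The only genuine subtlety — and the step I would flag as needing care rather than being truly hard — is verifying downward-closure where it is claimed: $\Av(\BB)$ and the simple permutations are \emph{not} both downwards-closed (simplicity is not hereditary), so for the simple case one gets only the generating-function conclusion, matching the statement's ``If $\CC$ is downwards-closed'' hedge; and one must check that the sets of (skew) sum-indecomposable permutations are indeed hereditary, which holds because a decomposition of a sub-permutation does not lift to the whole, so every sub-permutation of a sum-indecomposable permutation is sum-indecomposable. No new automata-theoretic or combinatorial machinery is required beyond what is already assembled in the preceding sections.
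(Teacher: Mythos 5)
Your main argument is correct and is essentially the paper's (implicit) proof: the paper proves this theorem simply by observing that the arguments for $\Geom(M)$ itself go through after conjoining $\theta_M$ (translated through the interpretation) with $\mathrm{Bij}_M$ for the generating function, and after replacing $\neg\Geom_M$ by $\neg(\Geom_M \wedge \theta_M)$ in $\mathrm{Basis}_{\BB,M}$ for the basis, using exactly your observation that a basis element of a downwards-closed $\CC \subseteq \Geom(M)$ is a one-point extension of an element of $\Geom(M)$ and hence lies in $\Geom(N)$.

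Two corrections, however. First, your closing claim that the (skew) sum-indecomposable permutations form a hereditary class is false, and the justification is backwards: the issue is not whether a decomposition of a sub-permutation lifts to the whole, but whether a sub-permutation of an indecomposable permutation can itself be decomposable --- and it can. For instance $231$ is sum-indecomposable, yet it contains $12 = 1 \oplus 1$. So for the simple and (skew) sum-indecomposable cases only the generating-function conclusion applies, exactly as for simplicity; among the listed examples only $\Geom(M) \cap \Av(\BB)$ is downwards-closed, so the erroneous claim does not affect the theorem as stated, but it should not appear as an assertion in the proof. Second, your basis algorithm terminates only if $\CC$ has a \emph{finite} basis, which you never address: if the basis were infinite, no finite $\BB$ would ever make $\mathrm{Basis}_{\BB,\CC}$ hold throughout $\Geom(N)$. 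Finiteness does hold, because geometrically griddable classes are partially well-ordered (so the basis elements of $\CC$, which split into basis elements of $\Geom(M)$ and an antichain inside $\Geom(M)$, form a finite set); this is a result of Albert et al.\ that should be invoked explicitly, since the halting of the procedure depends on it.
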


We now turn to the {\em substitution closure} of a geometric grid class. Intuitively, this is the closure of $\Geom(M)$ under the ``inflation'' procedure described above. We refer to \cite{albert2015inflations}*{\S 2} for a formal definition, but we will only need the following results concerning the basis of the substitution closure of $\Geom(M)$.

\begin{theorem}[\cite{albert2015inflations}*{Proposition 2.7, proof of Proposition 2.9, Theorem 4.4}] \label{thm:scBasis}
	The substitution closure of $\Geom(M)$ has a finite basis. The basis permutations are the minimal simple permutations not contained in $\Geom(M)$. Furthermore, there is a matrix $A$, independent of $M$, such that every basis permutation either belongs to $\Geom(M)^{+1}$ or to $\Geom(A)$.
	\end{theorem}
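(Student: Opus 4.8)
The plan is to reconstruct the argument of Albert and Vatter from \cite{albert2015inflations}, which combines the structure theory of substitution decompositions with the fact that simple permutations are essentially reconstructible from their simple subpermutations (Schmerl--Trotter). I will take for granted the standard machinery: every permutation $\pi$ of size at least $2$ has a unique substitution decomposition $\pi = \alpha[\beta_1, \dots, \beta_k]$ with $\alpha$ either simple or monotone (the $\beta_i$ being suitably constrained in the monotone cases), and iterating this yields a decomposition tree whose internal labels are simple or monotone permutations, each of which is a pattern of $\pi$ obtained by choosing one point from each block.

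First I would identify the basis. The crucial point is that passing to the substitution closure introduces no new simple permutations: a genuine inflation $\alpha[\beta_1, \dots, \beta_k]$ with $k \geq 2$ and some $|\beta_i| \geq 2$ has a proper nontrivial interval, hence is not simple, so $\mathrm{Si}(\langle \Geom(M) \rangle) = \mathrm{Si}(\Geom(M))$. Therefore $\pi \notin \langle \Geom(M) \rangle$ exactly when some simple label of its decomposition tree lies outside $\Geom(M)$; since every such label is a pattern of $\pi$, any $\leq$-minimal permutation outside $\langle \Geom(M) \rangle$ is itself simple, and must be a $\leq$-minimal simple permutation not in $\Geom(M)$. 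For the reverse inclusion, if $\sigma$ is a $\leq$-minimal simple permutation not in $\Geom(M)$ then $\sigma \notin \langle \Geom(M) \rangle$, while every proper subpermutation $\rho < \sigma$ has all of its decomposition-tree labels simple of size less than $|\sigma|$ and patterns of $\sigma$, hence in $\Geom(M)$ by minimality, hence $\rho \in \langle \Geom(M) \rangle$; so $\sigma$ is a basis element.

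Next I would prove the dichotomy and finiteness together. Let $\sigma$ be a basis element, so $\sigma$ is simple, $\sigma \notin \Geom(M)$, and every simple proper subpermutation of $\sigma$ lies in $\Geom(M)$. By the Schmerl--Trotter classification, either (i) $\sigma$ has a one-point deletion that is again simple --- which then lies in $\Geom(M)$, so $\sigma \in \Geom(M)^{+1}$ --- or (ii) $\sigma$ lies in one of the finitely many exceptional parametrized families of simple permutations (the parallel and wedge alternations, plus a handful of small sporadic permutations). Each exceptional family is geometrically griddable by a fixed small matrix, and since finitely many matrices can be assembled as disjoint blocks of a single matrix, there is one matrix $A$, independent of $M$, with $\sigma \in \Geom(A)$ in case (ii); this gives the dichotomy. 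For finiteness, Theorem \ref{thm:onepoint} supplies a matrix $N$ with $\Geom(M)^{+1} \subseteq \Geom(N)$, so the entire basis lies in $\Geom(N) \cup \Geom(A)$; this is a finite union of geometric grid classes and is therefore well-quasi-ordered under $\leq$ \cite{albert2013geometric}, and a basis is an antichain, so it must be finite.

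The main obstacle is clearly case (ii): one must invoke (or reprove) the Schmerl--Trotter description of which simple permutations fail to be reconstructible from their simple one-point deletions, extract the finite list of exceptional families, and check that each is contained in a geometric grid class of bounded size. By contrast, the ``no new simples'' lemma and the decomposition-tree bookkeeping of the first step are routine once the Albert--Atkinson substitution-decomposition machinery is available, and the finiteness argument is immediate from results already in the excerpt.
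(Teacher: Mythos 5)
This statement is imported verbatim from \cite{albert2015inflations} and the present paper gives no proof of it, so the only meaningful comparison is with that reference: your reconstruction follows essentially the same route (the substitution closure has the same simple permutations as $\Geom(M)$, so the basis consists of the minimal simple permutations outside $\Geom(M)$; the Schmerl--Trotter dichotomy puts each such permutation either in $\Geom(M)^{+1}$ or in one of the exceptional alternation-type families, which lie in a fixed class $\Geom(A)$; finiteness follows since the basis is an antichain inside a geometrically griddable, hence well-quasi-ordered, class). The only load-bearing ingredient you defer --- the classification of simple permutations with no simple one-point deletion and the check that these exceptional families are geometrically griddable by a bounded matrix --- is precisely what the cited source supplies, so invoking it is legitimate here.
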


\begin{theorem} \label{thm:compSC}
The basis of the substitution closure of $\Geom(M)$ is computable from $M$.
\end{theorem}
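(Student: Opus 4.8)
The plan is to combine the structural information in Theorem~\ref{thm:scBasis} with the decidability machinery already developed for $\Geom(M)$. By Theorem~\ref{thm:scBasis}, the basis $\BB^{sub}$ of the substitution closure of $\Geom(M)$ is finite, consists precisely of the minimal simple permutations not lying in $\Geom(M)$, and each such permutation lies in $\Geom(M)^{+1} \cup \Geom(A)$ for a fixed matrix $A$ independent of $M$. So, just as in the proof of Theorem~\ref{thm:compBasis}, I would first find a single geometric grid class containing all candidate basis elements: using Theorem~\ref{thm:onepoint} compute a matrix $N$ with $\Geom(M)^{+1} \subset \Geom(N)$, and then (since geometric grid classes are closed under the obvious ``superposition'' of matrices, or simply by taking a matrix whose cells realize both standard figures) compute a matrix $N'$ with $\Geom(N) \cup \Geom(A) \subset \Geom(N')$. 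Then $\Geom(N')$ contains $\BB^{sub}$, and by the Corollary following Lemma~\ref{lem:interp}, $Th(\Geom(N'))$ is decidable, with the translation computable from $N'$ and hence from $M$.

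Next I would set up a termination criterion entirely analogous to $\mathrm{Basis}_{\BB,M}$. The key point is that ``$\pi$ is a basis element of the substitution closure of $\Geom(M)$'' is checkable for a single finite $\pi$: by Theorem~\ref{thm:scBasis} it amounts to $\pi$ being simple, $\pi \notin \Geom(M)$, and every proper one-point deletion of $\pi$ lying in the substitution closure --- but since a proper subpermutation of a minimal-simple-not-in-$\Geom(M)$ permutation need not be simple, one instead uses the standard fact (from \cite{albert2015inflations}) that the basis elements are exactly the simple permutations $\pi \notin \Geom(M)$ all of whose proper simple subpermutations lie in $\Geom(M)$; equivalently one can just test minimality directly by checking all proper one-point deletions against membership in $\Geom(M)$ via Proposition~\ref{prop:msoDef}, which is decidable. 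So, for a finite candidate set $\BB \subset \Geom(N')$ of such simple permutations, I would write the $\LL_P$-sentence
\[ \mathrm{ScBasis}_{\BB,M} := \bigl(\mathrm{Simple} \wedge \neg \Geom_M \ra \bigvee_{B \in \BB} (\text{there exist elements forming an isomorphic copy of } B)\bigr), \]
which lies in $Th(\Geom(N'))$ if and only if $\BB$ contains every simple permutation in $\Geom(N') \setminus \Geom(M)$, hence (since $\BB^{sub} \subset \Geom(N')$ and every $B \in \BB^{sub}$ is simple and outside $\Geom(M)$) if and only if $\BB \supseteq \BB^{sub}$. The algorithm then enumerates permutations in increasing length, adds each one that is a basis element of the substitution closure to $\BB$, and after each addition tests whether $\mathrm{ScBasis}_{\BB,M} \in Th(\Geom(N'))$; finiteness of $\BB^{sub}$ guarantees this test eventually succeeds, and when it does $\BB = \BB^{sub}$.

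The one point needing slightly more care --- and the part I would flag as the main obstacle, though it is minor --- is justifying that $\mathrm{ScBasis}_{\BB,M}$ captures exactly ``$\BB \supseteq \BB^{sub}$'' rather than the a priori weaker ``$\BB$ contains all simple permutations of $\Geom(N') \setminus \Geom(M)$.'' These coincide only because every $B \in \BB^{sub}$ does lie in $\Geom(N')$, which is exactly what Theorem~\ref{thm:scBasis} together with the choice of $N'$ gives; conversely any simple permutation in $\Geom(N')\setminus\Geom(M)$ either equals some $B \in \BB^{sub}$ or properly contains one (by minimality of the basis and the fact that it is generated by avoiding simples), and in either case it suffices that $\BB$ contain the minimal ones --- so the sentence should really assert that every simple $\pi \notin \Geom(M)$ contains a copy of some $B \in \BB$, which is precisely what is written above. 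Finally, to know the enumeration halts one must know $\BB^{sub}$ is finite, which is the first assertion of Theorem~\ref{thm:scBasis}. The rest is a routine repackaging of the proof of Theorem~\ref{thm:compBasis}.
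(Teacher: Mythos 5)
Your proposal is correct and follows essentially the same route as the paper: compute a matrix $N$ with $\Geom(M)^{+1} \cup \Geom(A) \subseteq \Geom(N)$, test the sentence $(\mathrm{Simple} \wedge \neg\Geom_M) \ra \bigvee_{B\in\BB}(\text{contains a copy of } B)$ against $Th(\Geom(N))$ via the interpretation into words, and enumerate only simple permutations, halting by finiteness of the basis. The one quibble is your parenthetical claim that minimality can ``equivalently'' be tested by checking all one-point deletions for membership in $\Geom(M)$ --- a deletion of a basis element may lie outside $\Geom(M)$ while still being in the substitution closure --- but your primary test (every proper simple subpermutation lies in $\Geom(M)$) is the right one and is all the algorithm needs, so this does not affect correctness.
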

\begin{proof}
As in Theorem \ref{thm:compBasis}, we compute a matrix $N_0$ so that $\Geom(M)^{+1} \subset \Geom(N_0)$, and thus a matrix $N$ so that $\Geom(M)^{+1} \cup \Geom(A) \subset \Geom(N)$. We can then essentially repeat the proof of Theorem \ref{thm:compBasis}, but we replace $\mathrm{Basis}_{\BB, M}$ by the sentence 
\[(\neg \Geom_M \wedge \mathrm{Simple}) \ra \bigvee_{B \in \BB}(\text{there exist elements inducing a copy of $B$})\] 
and as we iterate through all permutations, we consider only simple ones.
\end{proof}

We may also give an upper bound on the size of the basis elements of the substitution closure of $\Geom(M)$ as in Proposition \ref{prop:bound}.

\section{Quantifier-free interpretations}
The primary fact about geometric grid classes leading to their good properties is that they are interpretable in the class of $\Sigma$-words by the particularly simple (in particular, quantifier-free) formulas defined in Lemma \ref{lem:interp}. In this section, we show that the existence of such a quantifier-free interpretation actually characterizes subclasses of geometric grid classes. We will find it easiest to prove the analogous result for graph classes of bounded lettericity and then use their connection to geometric grid classes.  Like the main result of \cite{alecu2022letter} recalled below, this also shows that graph classes of bounded lettericity and subclasses of geometric grid classes of permutations are in a sense the same classes in their respective categories.

\begin{definition}
	Given a finite alphabet $\Sigma$, we call $\PP \subset \Sigma^2$ a {\em decoder}. Given $w \in \Sigma^*$ and a decoder $\PP$, the {\em letter graph of $w$} is the graph with the same domain as $w$ where the edge relation is the symmetric closure of the following relation defined using the language of $\Sigma$-words from Section \ref{sec:preMSO}: put $x$ adjacent to $y$ if $w \models x < y \wedge U_i(x) \wedge U_j(y)$ and $(U_i, U_j) \in \PP$.
	
	A graph $G$ has {\em lettericity} at most $k$ if it is the letter graph of some $w \in \Sigma$ with respect to some decoder, where $|\Sigma| \leq k$.
\end{definition}

\begin{theorem}[\cite{alecu2020letter}*{Theorem 3}, \cite{alecu2022letter}*{Theorem 1.1}] \label{thm:lettgrid}
	A permutation class $\CC$ is a subset of a geometric grid class if and only if the corresponding class of inversion graphs $\GG_\CC$ has bounded lettericity.
\end{theorem} 

\begin{definition}
Recalling the definition of an interpretation $I \colon \DD \to \CC$ of $\CC$ in $\DD$ from Section \ref{sec:preMSO}, we will say $I$ is a  {\em quantifier-free interpretation} if each $\LL_\DD$-formula $R_S$ is quantifier-free.
\end{definition}

\begin{proposition} \label{prop:qfinterp}
	Let $\CC$ be a graph class. Then $\CC$ has lettericity at most $k$ if and only if there is a quantifier-free interpretation $I \colon \Sigma^* \to \CC$, for some $\Sigma$ with $|\Sigma| = k$.
	
	Let $\DD$ be a permutation class. Then $\DD \subset \Geom(M)$ for some $M$ if and only if there is a quantifier-free interpretation $I \colon \Sigma^* \to \DD$, for some $\Sigma$ with $|\Sigma|$ finite.
\end{proposition}
\begin{proof}
	Let $\CC$ be a graph class of lettericity at most $k$. Then the decoder for $\CC$ immediately provides the desired quantifier-free interpretation.
	
	Now let $\CC$ be a graph class for which there exists such an interpretation $I$ and quantifier-free formula $R_E(x,y)$. Writing $R_E(x,y)$ in disjunctive normal form, we have $R_E(x,y) := \bigvee_i \phi_i(x,y)$, where each $\phi_i$ is a conjunction of relations from the language of $\Sigma$-words and their negations. Each such $\phi_i$ can specify a subset of allowable unary relations for each of $x$ and $y$, and can specify either that $x < y$, that $y < x$, or neither. From this, the desired decoder is easily constructed.

	If $\DD \subset \Geom(M)$, then Lemma \ref{lem:interp} shows that $\phi_M$ provides the desired quantifier-free interpretation.
	
	Let $\DD$ be a permutation class for which there exists a quantifier-free interpretation $I \colon \Sigma^* \to \DD$. Let $\GG_\DD$ be the class of inversion graphs of $\DD$ and let $I' : \DD \to \GG_\DD$ be the interpretation with $R_E(x,y) := (x <_1 y \wedge x >_2 y) \vee (x <_2 y \wedge x >_1 y)$. Then $I' \circ I : \Sigma^* \to \GG_\DD$ is a quantifier-free interpretation, and so $\GG_\DD$ is a graph class of lettericity at most $k$. By Theorem \ref{thm:lettgrid}, $\DD$ is a subset of a geometric grid class, as desired.
\end{proof}

\section{Questions}

We close with some questions, the first suggested in the conclusion of \cite{albert2013geometric}.

\begin{question}
	Given a finite set $\BB$ of permutations, is it decidable whether $\Av(\BB)$ is contained in a geometric grid class?
\end{question}

It would be sufficient to show that if $\Av(\BB)$ is contained in a geometric grid class, then it is contained in a class $\Geom(M)$ where $|M|$ is bounded in terms of the size of the largest element of $\BB$.

\begin{question}
Can the upper bound on the size of a basis element in Proposition \ref{prop:bound} be improved to a polynomial in $|M|$? Or even to a linear bound?
	\end{question}

Since this paper first appeared, \cite{alecu2024lettericity} has carried out a parallel analysis for the class of graphs of lettericity at most $k$. Furthermore, \cite{alecu2024lettericity} gives a $2^{poly(k)}$ upper bound on the size of the basis elements for these classes by direct combinatorial arguments.

Viewing words as colored linear orders, the interpretations in Proposition \ref{prop:qfinterp} may be decomposed into a non-deterministic coloring (of a linear order) followed by an interpretation, which together is called a {\em transduction}. We refer to \cite{braunfeld2022first} for more details. This suggests two orthogonal ways of generalizing geometric grid classes: by starting with a more general class such as trees, or by allowing interpretations beyond quantifier-free ones. This leads to permutation classes of bounded (linear) clique-width studied in \cite{opler2022structural}. We ask whether the substitution closure analyzed in Section \ref{sec:ext} fits into this picture.

\begin{question}
	Is there a characterization of the substitution closure of either geometric grid classes or graph classes of bounded lettericity in terms of (possibly restricted) first-order interpretations or transductions?
	\end{question}

 \bibliographystyle{alpha}
\bibliography{../Bib}      
\end{document}